\newtheorem{theorem}{Theorem}[section]
\newtheorem{definition}{Definition}[section]
\newtheorem{proposition}{Proposition}[section]
\newtheorem{lemma}{Lemma}[section]
\newtheoremstyle{myremark}{10pt}{10pt}{}{}{\scshape}{.}{.5em}{}
\theoremstyle{remark}
\theoremstyle{myremark}
\newtheorem{remark}{Remark}
\numberwithin{equation}{section}
\newcommand{\be}{\mathbf{e}}
\newcommand{\F}{\mathbb{F}}
\newcommand{\bq}{\mathbf{q}}
\newcommand{\bx}{\mathbf{x}}
\newcommand{\f}{\mathbf{f}}
\newcommand{\bz}{\mathbf{z}}
\newcommand{\by}{\mathbf{y}}
\newcommand{\Q}{\mathbb{Q}}
\newcommand{\K}{\mathcal{K}}
\newcommand{\FF}{\mathbb{F}_q((T^{-1}))}
\newcommand{\FZ}{\mathbb{F}_q[T]}
\newcommand{\Z}{\mathbb{Z}}
\newcommand{\R}{\mathbb{R}}
\newcommand{\N}{\mathbb{N}}
\newcommand{\sd}[1]{{\color{red} \sf SD: (#1)}}
\newcommand{\yx}[1]{{\color{blue} \sf YX: (#1)}}
\begin{document}

\title{SINGULAR VECTORS AND $\psi$-DIRICHLET NUMBERS over function field}
\author{Shreyasi Datta and  Yewei Xu}
\address{ Department of Mathematics, University of Michigan, Ann Arbor, MI 48109-1043}
\email{dattash@umich.edu} \email{ywhsu@umich.edu}

\date{}

\begin{abstract}
We show that the only $\psi$-Dirichlet numbers in a function field over a finite field are rational functions, unlike $\psi$-Dirichlet numbers in $\R$. We also prove that  there are uncountably many totally irrational singular vectors with large uniform exponent in quadratic surfaces over a positive characteristic field.
\end{abstract}

\maketitle
\section{Introduction}

	 \subsection{$\psi$-Dirichlet numbers}
	
	Following \cite{Kwa}, we define $\psi$-Dirichlet vectors in $\FF^n$ and we denote the set of those vectors as $D(\psi)$. For the definitions of norms in $\FF^n$, readers are referred to \S\ref{set}.
	\begin{definition}\label{psidi}
		Let $\psi \ : [t_0, +\infty) \to \mathbb{R_{+}}$ be a function. A vector $\bx=(x_1,\cdots,x_n)\in \FF^n$ is said to be $\psi$-Dirichlet if for all sufficiently large $Q>0$ there exists $\mathbf{0}\neq\bq\in \FZ^n$, $q_0\in\FZ$ satisfying the following system \begin{equation}\label{dirim}
		\begin{aligned}&\vert \bq\cdot\bx+q_0\vert<\psi(Q),\\&\Vert \bq\Vert\leq Q.\end{aligned}
		\end{equation}\end{definition}
		
	Let $\psi_c(Q)=\frac{c}{Q^n}$. If $\bx\in \FF^n$ is $\psi_c$-Dirichlet for every $c>0$, then $\bx$ is called singular vector.
	In recent years, $\psi$-Dirichlet vectors were studied in \cite{Kwa,Kwa2,KSY}. Even in the classical setting not much is known.\\
	\indent Diophantine approximation in function field has been a topic of interest since the work of Artin, \cite{Artin}, which developed the theory of continued fraction, and followed by Mahler's work in \cite{KURT}, which studied geometry of numbers in function field. For recent developments, we refer readers the survey \cite{Las}, and to \cite{Ghosh2007,Ganguly2022, K2003, KimNakada,KImlimPau, JasonYann, Boryann, ANF} for a necessarily incomplete set of references. There are many interesting similarities and contrasts between the theory of Diophantine approximation over the real numbers and in function field over finite fields. The main theorems in this paper show both of these features.
	
	\indent In \cite{Kwa2}, for a non-increasing function $\psi(t)<\frac{1}{t}$ with $t\to t\psi(t)$ non-decreasing, it was shown that $D(\psi)$ in $\R$ has zero-one law for Lebesgue measure depending on divergence or convergence of certain series involving $\psi$. Surprisingly, the same is not true over function field as we prove the following.
	
	\vspace{0.2 cm}
	\begin{theorem}\label{third}
	Let $\psi \ : [t_0, +\infty) \to \mathbb{R}_{+}$ be non-increasing.
	If $\psi(t) < \frac{1}{t}$ for sufficiently large $t$, then $D(\psi) = \mathbb{F}_q(T)$.
\end{theorem}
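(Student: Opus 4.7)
My plan is to exploit the discreteness of the non-Archimedean absolute value on $\FF$ (values in $q^{\Z} \cup \{0\}$) to convert the $\psi$-Dirichlet inequality into an algebraic rank condition on a Hankel matrix, then apply a classical theorem of Kronecker on rational Laurent series. The easy inclusion $\F_q(T) \subseteq D(\psi)$ is immediate: if $x = \alpha/\beta$ with $\alpha, \beta \in \FZ$ and $\beta \neq 0$, then for every $Q \geq |\beta|$ (or suitable scaling $(T^k\beta, -T^k\alpha)$) the pair $(\bq, q_0) = (\beta, -\alpha)$ satisfies $|\bq x + q_0| = 0 < \psi(Q)$ and $|\bq| \leq Q$.

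For the converse, assume $x \in \FF$ is $\psi$-Dirichlet. Subtracting its integer part, we may assume $|x| < 1$ and write $x = \sum_{k \geq 1} a_k T^{-k}$ with $a_k \in \F_q$. Apply the hypothesis at $Q = q^m$ for each large $m \in \N$: there exist a nonzero $\mathbf{a} \in \FZ$ of degree at most $m$ and $q_0 \in \FZ$ with $|\mathbf{a} x + q_0| < \psi(q^m) < q^{-m}$. Because $|\mathbf{a} x + q_0| \in q^{\Z} \cup \{0\}$, this inequality forces either $\mathbf{a} x + q_0 = 0$ (in which case $x = -q_0/\mathbf{a} \in \F_q(T)$ and we are done) or $|\mathbf{a} x + q_0| \leq q^{-m-1}$. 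Taking $q_0 = -\lfloor \mathbf{a} x \rfloor$, the latter is equivalent to the vanishing of the Laurent coefficients of $T^{-1}, T^{-2}, \ldots, T^{-(m+1)}$ in $\mathbf{a} x$.

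Writing $\mathbf{a} = \sum_{i=0}^{m} c_i T^i$ with $c_i \in \F_q$, this vanishing becomes the homogeneous linear system
\[
\sum_{i=0}^{m} c_i\, a_{i+l} = 0, \qquad l = 1, 2, \ldots, m+1,
\]
a square $(m+1) \times (m+1)$ system over $\F_q$ whose coefficient matrix is the Hankel matrix $H_m := (a_{i+l})_{1 \leq l \leq m+1,\ 0 \leq i \leq m}$. Existence of a nonzero $\mathbf{a}$ forces $\det H_m = 0$. Since $x$ is $\psi$-Dirichlet, this must hold for all sufficiently large $m$. After reindexing to the standard Hankel matrix of the shifted sequence $(a_{k+1})_{k \geq 0}$, the classical theorem of Kronecker --- a Laurent series $\sum_{k \geq 0} b_k T^{-k}$ over a field is rational iff its Hankel determinants eventually vanish --- yields that $Tx = \sum_{k \geq 0} a_{k+1} T^{-k}$ belongs to $\F_q(T)$, and hence $x \in \F_q(T)$.

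The principal subtlety is the careful matching in the linear-algebra step: the hypothesis $\psi(t) < 1/t$ produces exactly $m+1$ vanishing Laurent coefficients against $m+1$ free parameters, making the system square so that the rank drop is a genuine algebraic constraint (a vanishing determinant). This exact matching is precisely what fails in the real setting, where the Archimedean absolute value allows $|\bq x + q_0|$ to take any positive real value and no analogous algebraic consequence can be extracted; this dichotomy explains the contrast with \cite{Kwa2}. Once the algebraic reformulation is in place, Kronecker's theorem (valid over any field) delivers rationality with no further work.
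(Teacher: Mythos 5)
Your proof is correct, and it takes a genuinely different route from the paper's. The paper works through the theory of continued fractions in $\F_q((T^{-1}))$: it quotes the identity $\left|\langle q_{n-1}x\rangle\right| = 1/|q_n|$ for convergents (Equation~\eqref{cont1}), together with the convergent characterization of $\psi$-Dirichlet numbers (Lemma~\ref{cont2}, itself resting on the best-approximation theorem, Theorem~\ref{cont3}), and then observes that $\psi(|q_n|)<1/|q_n|$ makes the required inequality impossible for any irrational $x$. You instead exploit the discreteness of the valuation directly: at each scale $Q=e^m$ the strict inequality $\left|\mathbf{a}x+q_0\right|<\psi(Q)<1/Q$ forces an extra unit of decay, yielding exactly $m+1$ vanishing Laurent coefficients against $m+1$ free coefficients of $\mathbf{a}$, hence a vanishing Hankel determinant, and Kronecker's theorem finishes. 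The two routes are classically related (Hankel determinants and continued fractions are two faces of the same Pad\'e theory), but your linear-algebra version avoids citing the function-field best-approximation machinery altogether, at the cost of invoking Kronecker; it also has the pleasant side effect of making the ``exact matching'' $m+1=m+1$ explicit, which cleanly isolates why the result fails over $\R$. One small caveat worth flagging if you write this up: the form of Kronecker's theorem you need is the one requiring only the leading principal Hankel minors $\det(b_{i+j})_{0\le i,j\le m}$ to vanish for all large $m$ (some references state it with all shifted Hankel minors). That leading-principal version is correct --- if $r$ is the largest index with $\det H_r\neq 0$, the nonsingularity of $H_r$ lets one bootstrap the length-$(r+1)$ recurrence obtained from $\det H_{r+1}=0$ to all indices by a block-triangular determinant computation --- but it is enough of a subtlety that it deserves a precise citation or a short proof rather than a bare appeal to ``Kronecker.'' Also note the paper's normalization $|a|=e^{\deg a}$ rather than $q^{\deg a}$; this is purely cosmetic and your argument goes through verbatim with $Q=e^m$.
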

	The above theorem shows that analogue of the main theorem  in \cite{Kwa2} over function field becomes drastically different than the real case.   
	The main tool in proving Theorem \ref{third} is the use of continued fraction expansion.
	 \subsection{Plenty of singular vectors}
	 The second part of this paper deals with singular vectors in submanifolds of function fields. Note that, if $(x_1,\cdots ,x_n)\in \FF^n$ belongs to a rational affine hyperplane, then it must be singular. These are the most trivial singular vectors. In fact, the converse is also true when $n=1$ (ref.\cite{AG2}). So, we have the following definition to find vectors those can not be singular in a trivial manner.
	\begin{definition}
		We call a vector totally irrational vector if it is not inside a rational affine hyperplane of $\FF^n$.
	\end{definition}
	For $n>1$, in \cite{Kh} Khintchine showed the existence of infinitely many totally irrational singular vectors in $\R^n$. Moreover, Kleinbock, Moshchevitin and Weiss  in \cite{KNW} showed that for real analytic submanifolds (of dimension greater than $2$) which are not contained inside a rational affine subspace, there are uncountably many totally irrational singular vectors. In this paper, we prove analogous result for certain submanifolds in $\FF^n$.
	
\newpage

As a special case of our Theorem \ref{plenty}, we prove the following theorem.
	 \begin{theorem}\label{second}
Suppose $\text{char}(\FF) = p < \infty$. Let $U$ be an open subset of $\FF$.
We consider $S$ of the following two types:
\begin{itemize}
\item $S=\{(x,y, p_3(x,y),\cdots,p_n(x,y)) \ \mid \ x, y \in U\}\subset \FF^n$, where  and each $p_i(x,y)$ is a degree $2$ polynomial.
\item $S=\{(x,y,p(x,y))~|~ x,y\in U\}\subset \FF^3$, where $p(x,y)=\sum_{i=0}^m a_i^{p^i}x^{p^i}+\sum_{j=0}^n b_j^{p^j}y^{p^j}$.\end{itemize}   Suppose that $S$ is not contained inside any affine rational hyperplane,
then there exist uncountably many totally irrational singular vectors in $S$.
   
\end{theorem}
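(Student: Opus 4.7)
The plan is to adapt the Cantor-set construction of Kleinbock--Moshchevitin--Weiss \cite{KNW} to the function-field setting via a Dani-type correspondence. A vector $\bx \in \FF^n$ is singular if and only if the trajectory $g_t u_\bx \FZ^{n+1}$ diverges in the space $\SL_{n+1}(\FF)/\SL_{n+1}(\FZ)$, where $u_\bx$ is the unipotent matrix with $\bx$ in its first row and $g_t = \diag(T^{nt}, T^{-t}, \ldots, T^{-t})$. The problem thus reduces to producing uncountably many points on $S$ whose lifts have divergent $g_t$-orbit and avoid every rational affine hyperplane of $\FF^n$.

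The core is an inductive tree construction in $U \times U$. Starting from a small box $B_0$, at stage $k$ I subdivide the previously retained boxes into smaller sub-boxes and keep only those $B$ on which there is a nonzero vector $(q_0, \bq) \in \FZ^{n+1}$ of norm at most $Q_k$ satisfying $|\bq \cdot \f(x,y) + q_0| < c_k Q_k^{-n}$ uniformly for $(x,y) \in B$, with $c_k \to 0$ and $Q_k \to \infty$ chosen in advance. Any point in the resulting Cantor-like intersection is then $\psi_c$-Dirichlet for every $c > 0$, hence singular. To enforce total irrationality I additionally excise at each stage a small neighborhood of the finitely many rational affine hyperplanes that must be avoided at that scale; since $S$ is not contained in any such hyperplane, each meets $S$ in a proper subvariety, and a standard diagonal argument still leaves uncountably many surviving points.

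The main obstacle is showing that the polynomial structure of $S$ supplies enough integer vectors to keep the recursion alive at every stage and on sub-boxes of controlled size. In case (i), after freezing the parent box the requirement becomes a uniform smallness condition for a quadratic polynomial in $(x,y)$ with integer coefficients; the two free parameters in $(x,y)$, together with the quadratic form structure in the $\bq$-variables, let one use a pigeonhole/counting argument over $\FZ$ to produce many admissible $(q_0,\bq)$ and to identify a sub-box on which the approximation is uniformly valid. In case (ii) the additive polynomial $p(x,y) = \sum_{i} a_i^{p^i} x^{p^i} + \sum_{j} b_j^{p^j} y^{p^j}$ becomes $\F_p$-linear after composition with Frobenius, so the required integer approximations are produced by a linear-algebra argument over $\F_p$ that has no direct analogue in characteristic zero; this is what makes the case $n = 3$ accessible with a single nontrivial coordinate. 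Once the existence of such approximations is established at every scale, the Cantor construction delivers the claimed uncountable family of totally irrational singular vectors in $S$.
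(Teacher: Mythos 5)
There is a genuine gap at the heart of your construction: the mechanism you propose for producing the approximating integer vectors does not work. You claim that on a sub-box $B$ one can find $(q_0,\bq)\in\FZ^{n+1}\setminus\{0\}$ with $\Vert\bq\Vert\le Q_k$ and $|\bq\cdot\f(x,y)+q_0|<c_kQ_k^{-n}$ \emph{uniformly on} $B$, with $c_k\to 0$, via ``a pigeonhole/counting argument over $\FZ$.'' Pigeonhole (Dirichlet's theorem) only yields the exponent $Q^{-n}$ with constant $1$; it cannot produce $c_k\to 0$, because singular vectors form a null set (on $\FF^n$ and on nondegenerate surfaces), so no argument valid on whole boxes at every scale can succeed generically. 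For $|\bq\cdot\f(x,y)+q_0|$ to be uniformly tiny on $B$, the box $B$ must lie in a thin neighborhood of the variety $\{\bq\cdot\f(x,y)+q_0=0\}$, i.e.\ of $S\cap A$ for the rational affine hyperplane $A:\bq\cdot\bx+q_0=0$. This is exactly Khintchine's mechanism, which the paper uses (via the function-field transplant of Theorem 1.1 of \cite{KNW}, stated as Theorem \ref{c}): one steers the point into nested shrinking neighborhoods of a sequence of rational hyperplane sections $L_j=S\cap A_j$ with $|A_j|\to\infty$, and the approximating vectors are the defining equations of those hyperplanes. Your proposal inverts the logic: it treats the rational hyperplanes purely as obstacles to be excised for total irrationality, when in fact they are the only source of the required approximations.

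Once this is recognized, the real work — which your sketch dismisses as ``each meets $S$ in a proper subvariety'' plus a ``standard diagonal argument'' — is precisely to verify the structural hypotheses (a)--(d) of Theorem \ref{c} for the two families of surfaces: that the set of rational points of $S\cap A$ decomposes, away from finitely many exceptional points, into $\FF$-analytic curves; that the distinguished sections $L_i$ (normal to the $x$- or $y$-axis) are approximable by $L_i\cap L_j$ with $|A_j|$ arbitrarily large; and that no $L_i$ is exhausted by finitely many other sections (which needs $L_i\cap L'_{k'}$ to be finite, hence the curve decomposition). This is the content of Lemmas \ref{l44}--\ref{l46}, Propositions \ref{prop41} and \ref{r45}, and Theorems \ref{5td2} and \ref{5tfin}, and it is where the characteristic-$p$ and additive-polynomial case analysis actually enters (e.g.\ the $\mathrm{char}=2$ square/non-square dichotomy, the iterated $p$-th-root extraction). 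The Dani correspondence you invoke at the outset plays no role in your subsequent argument and is not needed. To repair the proposal you would need to replace the pigeonhole step by the hyperplane-section mechanism and then supply the intersection analysis; at that point you would have reproduced the paper's proof.
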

	 
	The main challenge comes from the lack of understanding about intersections of a surface and an affine subspace in the function field setting. Another difficulty comes due to total disconnectedness of function field. For real submanifolds, intersection of a connected analytic surface and an affine subspace is well understood due to \cite{EP}, \S 2. Both of these facts  were used in \cite{KNW} in a crucial manner. The proof in \cite{KNW} relies on understanding how `semianalytic' sets can spilt into connected analytic sets. This becomes difficult in $\F_q((T^{-1}))^n$, as the notion of semianalyticity is not well defined due to the lack of order and the space $\F_q((T^{-1}))$ is totally disconnected. That is why we had to tackle case by case and we prove the theorem for a class of submanifolds which is smaller than the class of submanifolds that was taken in \cite{KNW}.

\subsection{On uniform exponent}

One can define $\hat\omega(\cdot)$, as follows, which quantifies singularity of a vector.
\begin{equation}
     \hat\omega(\by):= \sup \left\{ \omega ~\left|~ \text { for all large enough } Q>0, \exists~ (q_0, \bq)\in \FZ^{n+1}\setminus\{0\} \text{ s.t.}  \begin{aligned}
     & \Vert \bq \cdot \by +q_0\Vert\leq \frac{1}{Q^\omega},\\
     & \Vert \bq\Vert\leq Q \end{aligned}
     \right.\right\}
\end{equation}
Dirichlet's Theorem (ref. \cite{GG}) gives that $\hat \omega(\by)\geq n$ for all $\by\in \FF^n$. Our next theorem verifies that for a certain analytic submanifolds in $\FF^n$, there are plenty of totally irrational vectors whose exponents $\hat\omega(\cdot)$ are infinity. 
   \begin{theorem}\label{plenty}
Let $\text{char}(\FF) = p < \infty$.
Let $U$ be an open subset of $\FF$.
We consider $S$ of the following two types:
\begin{itemize}
\item $S=\{(x,y, p_3(x,y),\cdots,p_n(x,y)) \ \mid \ x, y \in U\}\subset \FF^n$, where  and each $p_i(x,y)$ is a degree $2$ polynomial.
\item $S=\{(x,y,p(x,y))~|~ x,y\in U\}\subset \FF^3$ where $p(x,y)=\sum_{i=0}^m a_i^{p^i}x^{p^i}+\sum_{j=0}^n b_j^{p^j}y^{p^j}$.\end{itemize}   Suppose that $S$ is not contained inside any rational affine hyperplane,
then there exist uncountably many totally irrational $\by$ in $S$ such that $\hat\omega(\by)=\infty$.
   
\end{theorem}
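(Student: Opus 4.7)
The plan is to construct the required vectors in $S$ as limits of nested clopen neighborhoods in a Cantor-tree fashion, in the spirit of Khintchine's classical construction and its submanifold refinement by Kleinbock--Moshchevitin--Weiss, exploiting the ultrametric and locally compact geometry of $\FF^n$. I would fix an enumeration $\{H_m^{\mathrm{bad}}\}_{m\in\N}$ of the countably many rational affine hyperplanes of $\FF^n$ and build, by induction on $|\sigma|$, a family of nonempty clopen subsets $B_\sigma\subset S$ together with integer pairs $(q_{0,\sigma},\bq_\sigma)\in \FZ\times\FZ^n$, indexed by binary strings $\sigma\in\{0,1\}^{<\N}$, so that: (i) $B_{\sigma 0}$ and $B_{\sigma 1}$ are disjoint subsets of $B_\sigma$ with $\diam(B_\sigma)\to 0$; (ii) $B_\sigma\cap H_m^{\mathrm{bad}}=\emptyset$ for $m\le |\sigma|$; (iii) $\|\bq_\sigma\|\to\infty$; (iv) $|\bq_\sigma\cdot\by+q_{0,\sigma}|<\|\bq_\sigma\|^{-|\sigma|}$ for every $\by\in B_\sigma$. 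Along each infinite binary branch $\omega$, local compactness of $\FF^n$ gives a unique limit point $\by_\omega\in S$; property (ii) forces it to be totally irrational, (iii)--(iv) yield $\hat\omega(\by_\omega)=\infty$, and (i) produces uncountably many distinct such points.

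For the inductive step, given $B_\sigma$ I would first produce a nonzero integer pair $(q_0,\bq)\in \FZ\times\FZ^n$ with $\|\bq\|$ as large as desired and a point $\by_{**}\in B_\sigma$ lying on the rational affine hyperplane $H=\{\bq\cdot\bx+q_0=0\}$, with $\by_{**}$ avoiding the finitely many prescribed hyperplanes $H_1^{\mathrm{bad}},\dots,H_{|\sigma|+1}^{\mathrm{bad}}$. Then, using the ultrametric structure of $\FF^n$, I would shrink $B_\sigma$ to a clopen neighborhood of $\by_{**}$ of radius $r\le \|\bq\|^{-(|\sigma|+2)}$, partition it into $q^{Nn}$ clopen sub-pieces of radius $rq^{-N}$ for $N$ large, and select two pieces $B_{\sigma 0},B_{\sigma 1}$ that avoid $\by_{**}$ and each of $H_1^{\mathrm{bad}},\dots,H_{|\sigma|+1}^{\mathrm{bad}}$. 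Such a choice always exists by ultrametric pigeonhole, since each bad hyperplane excludes at most $q^{(n-1)N}$ of the $q^{nN}$ sub-pieces. Assigning $(q_{0,\sigma i},\bq_{\sigma i}):=(q_0,\bq)$, for any $\by\in B_{\sigma i}$ the bound $\|\by-\by_{**}\|\le r$ yields $|\bq\cdot\by+q_0|=|\bq\cdot(\by-\by_{**})|\le\|\bq\|\cdot r\le\|\bq\|^{-(|\sigma|+1)}$, verifying (iv). Moreover, no such sub-piece can lie inside any rational hyperplane $H'$, because the hypothesis $S\not\subset H'$ forces $S\cap H'$ to have codimension at least one in $S$, so by the polynomial nature of the parametrization, no open box of the parameter space can sit inside a proper subvariety.

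The main obstacle, as the authors themselves emphasize, is the production of $(q_0,\bq)$ and $\by_{**}$ at each stage, and this is where the explicit polynomial structure of the two surface families enters. For the quadric case, pulling back by $(x,y)\mapsto(x,y,p_3(x,y),\dots,p_n(x,y))$ reduces the requirement to finding $(x,y)$ in a prescribed clopen box of $U\times U$ satisfying the equation $c_0+c_1 x+c_2 y+\sum_{i\ge 3}c_i p_i(x,y)=0$ of degree at most two with $\FZ$-coefficients of arbitrarily large norm; starting from a point $(x_0,y_0)$ in the box with $\F_q(T)$-coordinates (available by density of $\F_q(T)$ in $\FF$, assuming as is standard that the $p_i$ are defined over $\F_q(T)$), the induced linear equation in the $c_i$ has an $n$-dimensional $\FZ$-lattice of solutions, from which coefficient vectors of arbitrarily large norm avoiding a finite forbidden list of rational hyperplanes can be selected. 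For the surface of the second type, the additive identity $p(x+x',y+y')=p(x,y)+p(x',y')$ valid in characteristic $p$ makes $S\cap H$ a coset of an explicit $\F_p$-subspace of $U\times U$, and by choosing the $\FZ$-coefficients of $H$ to be appropriate linearized polynomials of arbitrarily large degree one can force the coset to meet any given clopen ball; the intersection property is then immediate. Granted these two case-specific intersection lemmas, the induction runs as described and produces the claimed uncountable family of totally irrational vectors in $S$ with $\hat\omega(\by)=\infty$.
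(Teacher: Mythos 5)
Your overall architecture (a Cantor tree of nested clopen sets, each forced close to a rational hyperplane of growing height, with finitely many ``bad'' hyperplanes excluded at each level) is the right family of ideas --- it is Khintchine's construction, which the paper also uses, but packaged there as Theorem \ref{c} (the function-field transcription of Theorem 1.1 of \cite{KNW}) applied after verifying its hypotheses (a)--(d) for the two surface families (Theorem \ref{5td2}, Theorem \ref{5tfin}, via Propositions \ref{prop41} and \ref{r45} on the structure of $S\cap A$). However, your version has a genuine gap exactly at the point those hypotheses are designed to handle: conditions (iii)--(iv) do \emph{not} imply $\hat\omega(\by_\omega)=\infty$. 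For $Q$ in the range $\Vert\bq_{\sigma_k}\Vert\le Q<\Vert\bq_{\sigma_{k+1}}\Vert$ the only available denominator is $\bq_{\sigma_k}$, and you need $|\bq_{\sigma_k}\cdot\by+q_{0,\sigma_k}|\le Q^{-\omega}$ with $Q$ as large as (just below) $\Vert\bq_{\sigma_{k+1}}\Vert$; your bound $\Vert\bq_{\sigma_k}\Vert^{-|\sigma_k|}$ suffices only if $\Vert\bq_{\sigma_{k+1}}\Vert\le\Vert\bq_{\sigma_k}\Vert^{|\sigma_k|/\omega}$. But in your induction the radius $r_k$ (hence the quality of the $k$-th approximation) is frozen \emph{before} the $(k{+}1)$-st hyperplane is chosen, and that hyperplane is only required to have norm ``as large as desired,'' with no upper bound. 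Trying to repair this by choosing $H_{k+1}$ first and $r_k$ afterwards runs into a circularity unless the new base point $\by_{**}^{(k+1)}$ lies \emph{exactly} on the previous hyperplane $H_k$ --- otherwise the defect $|\bq_{\sigma_k}\cdot\by_{**}^{(k+1)}+q_{0,\sigma_k}|$ is an error term you cannot shrink after the fact. This is precisely the content of hypothesis (b) of Theorem \ref{c}: one needs, along each hyperplane section $L_i$, accumulation by intersections $L_i\cap L_j$ with $|A_j|$ arbitrarily large. The paper secures this by taking the $L_i$ to be sections by coordinate hyperplanes $x=a$, $y=b$ with $a,b\in\F_q(T)$, whose pairwise intersections are single points that are dense in each $L_i$ with heights $\to\infty$.

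Two smaller points. First, your quadric intersection lemma assumes the $p_i$ are defined over $\F_q(T)$; the paper allows arbitrary coefficients in $\FF$, and the correct (and simpler) source of hyperplanes of large height through any prescribed clopen box is the family of coordinate hyperplanes above --- which is also what makes the accumulation property (b) available. Second, your pigeonhole count of excluded sub-pieces is carried out in the ambient ball of $\FF^n$, whereas it must be done in the two-dimensional parameter box, and it requires knowing that each section $S\cap H'$ is at most a curve (finitely many non-analytic points aside); that is the content of Propositions \ref{prop41} and \ref{r45}, which is where the real case-by-case work of the paper lies and which your proposal essentially presupposes rather than proves.
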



\begin{remark}
	\leavevmode
	\begin{enumerate}
	 \item In Lemma \ref{surface}, we show that the above theorem is true for some higher dimensional submanifolds. 
		\item Theorem $2.4$ in \cite{AG2} shows that only \textit{Dirichlet improvable} numbers in function field are rational functions. Our Theorem \ref{third} generalizes the above mentioned result showing that even  $\psi$-Dirichlet numbers are also only rational functions. We note that the technique of \cite{AG2} is different than ours. 
	 
	\end{enumerate}
	
\end{remark}
\subsection{Norms and topology}\label{set}
In this section and in the following sections, we will use $\vert \cdot\vert$ (resp. $\Vert \cdot\Vert$ ) to denote norm in $\FF$ (resp. $\FF^n$), unless otherwise mentioned.
Let $p$ be a
prime and $q := p^r$, where $r \in \N$ and consider the finite field $\F_q$. We consider the integral domain $\F_q[T]$, the set of polynomials with coefficients in  $\F_q$. Then we consider the function field $\F_q(T )$. We define
a norm $\vert \cdot \vert$ on $\F_q(T )$ as follow:

$$ |0| := 0; \left\vert\frac{P}{Q}\right\vert
:= e^{\mathrm{degP} - \mathrm{degQ}}$$ for all nonzero $P,Q \in \F_q[T ]$ .
Clearly $| \cdot|$ is a nontrivial, non-archimedian and discrete absolute value in $\F_q(T )$.
The completion field of $\F_q(T )$ with respect to this absolute value is $\F_q((T^{-1}))$, i.e. the field of Laurent series over
$\F_q$.  We will denote the absolute value of $\F
_q((T^{-1}))$ by the same notation $|\cdot|$, is given as
follows. Let $a \in \F_q((T^{-1}))$,
$$\vert a\vert:=\left\{
\begin{aligned}
&0 \text{ if } a=0,\\
& e^{k_0} \text{ if } a=\sum_{k\leq k_0}a_kT^k, k_0 \in\Z, a_k \in \F_q \text{ and } a_{k_0} \neq 0. 
\end{aligned}\right.
$$
This clearly extends the absolute value $| \cdot |$ of $\F_q(T ) \to \F_q((T^{-1}))$ and moreover, the
extension remains non-archimedian and discrete. In the above, we call $k_0$ to be the degree of $a$, $\mathrm{deg}~ a$.  It is obvious that $\FZ$ is discrete in $\FF$. For any
$n \in \N$, throughout $\FF^n$ is assumed to be equipped with the supremum norm which
is defined as 
$\Vert\bx\Vert := \max_{1\leq i\leq n}|x_i|$ for all $\bx = (x_1, x_2, ..., x_n) \in \FF^n$ ,
and with the topology induced by this norm. Clearly $\FZ^n$ is discrete in $\FF^n$. Since
the topology on $\FF^n$ considered here is the usual product topology on $\FF^n$, it follows
that $\FF^n$ is locally compact as $\FF$ is locally compact. Note this construction $\FZ\subset\F_q(T)\subset \FF$ is similar to $\Z\subset\Q\subset\R$. Let $\lambda$ be the Haar measure on
$\FF^n$ which takes the value 1 on the closed unit ball $\Vert \bx\Vert = 1$.

\section{\texorpdfstring{$\psi$}{psi}-Dirichlet numbers in function field}\label{pp} 

\subsection{Continued fraction over function field} 
Suppose $a=\sum_{k\leq k_0} a_k T^k\in \FF$ where $a_{k_0}\neq 0$, we call $[a]:= \sum_{k\leq k_0}^0 a_k T^k$ as the integer part of $a$ and $\langle a\rangle =\sum_{k< 0}a_k T^k$ as the fractional part of $a$. Note that $\vert [a]\vert=e^{k_0}\geq 1$ if $a_{k_0}\neq 0$, and otherwise we have $[a]=0$. Also, note that $\vert \langle a\rangle \vert\leq 1$. This observation leads us to construct continued fraction expansion of $a$.  
An expression of the form $a_0 + \frac{1}{a_1 + \frac{1}{a_2 + \dots}}$ where $a_0, a_1,a_2 \in \mathbb{F}_q[T]$ is called a simple continued fraction; see \S $1$ in \cite{SchmidtWolfgang}. An expression of the form $\frac{p_n}{q_n} = a_0 + \frac{1}{a_1 + \frac{1}{a_2 + {\dots}_{+\frac{1}{a_n}}}}$, where $p_n,q_n,a_0,a_1,\dots,a_n \in \mathbb{F}_q[T]$ is called a finite continued fraction.
An element of $\mathbb{F}_q(T)$, can be represented as an unique finite continued fraction.
An $\alpha\in \FF\setminus \FZ$ can be represented as a simple continued fraction in the form of $[a_0,a_1,a_2,\dots]$ and 
we call the numbers $\frac{p_n}{q_n} = [a_0,a_1,\dots,a_n]$ the convergents of $\alpha$.
Note that $|q_n|$ is increasing as $n\to \infty$.  The relation between two consequitive covergents is given by the following equation: 
$$p_iq_{i+1} - p_{i+1}q_i = (-1)^{i+1} \text{ for } i \in \mathbb{Z}, i \geq -2.$$ Hence we have, $$\left \vert \frac{p_{n+1}}{q_{n+1}} - \frac{p_n}{q_n} \right\vert = \left\vert \frac{p_{n+1}q_{n} - p_{n}q_{n+1}}{q_{n}q_{n+1}} \right\vert = \left \vert \frac{\pm 1}{q_{n}q_{n+1}}\right\vert = \frac{1}{\vert q_{n} \vert \cdot \vert q_{n+1} \vert} \leq \frac{1}{\vert q_{n} \vert^2}.$$
In fact, by Equation $1.12$ in \cite{SchmidtWolfgang} we have \begin{equation}\label{cont1}\left\vert \alpha - \frac{p_n}{q_n} \right\vert = \left\vert \frac{1}{q_nq_{n+1}}\right\vert,
\end{equation} where  $\frac{p_n}{q_n}$ is a convergent of $\alpha \in \FF \setminus \mathbb{F}_q(T)$.
We recall the following definition and theorem of best approximation \cite{Malagoli}, \S 1.2.
\begin{definition}
We say a rational $\frac{a}{b}$ is the best approximation to some $\alpha \in \FF$ if for all $\frac{c}{d}$ such that $\vert d \vert \leq \vert b \vert$ we have $\vert b \alpha - a \vert \leq \vert d \alpha - c \vert$.
\end{definition}

\begin{theorem}
\label{cont3}
Let $\alpha \in \FF$ and let $(\frac{p_n}{q_n})_n$ be its convergents.
Let $p,q \in \mathbb{F}_q[T]$ with $q \neq 0$ be two relatively prime polynomials. 
Then $\frac{p}{q}$ is a best approximation to $\alpha$ if and only if it is a convergent to $\alpha$.
\end{theorem}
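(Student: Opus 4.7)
The plan is to exploit the unimodularity of the matrix $M_n := \begin{pmatrix} p_n & p_{n+1} \\ q_n & q_{n+1} \end{pmatrix}$, which has determinant $\pm 1$ by the identity displayed just above (\ref{cont1}). As a consequence, any $(p, q) \in \F_q[T]^2$ admits a unique expansion $(p, q) = u(p_n, q_n) + v(p_{n+1}, q_{n+1})$ with $u, v \in \F_q[T]$, and correspondingly $q\alpha - p = u(q_n\alpha - p_n) + v(q_{n+1}\alpha - p_{n+1})$. Combined with the evaluation $|q_k\alpha - p_k| = 1/|q_{k+1}|$ from (\ref{cont1}) and the non-archimedean property of $|\cdot|$, this decomposition is essentially the only tool needed.

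For the "if" direction, fix a convergent $p_n/q_n$ and take a competitor $p'/q' \neq p_n/q_n$ with $|q'| \leq |q_n|$. Decomposing $(p', q')$ as above, one finds $v \neq 0$, and the inequality $|vq_{n+1}| \geq |q_{n+1}| > |q'|$ forces cancellation in $q' = uq_n + vq_{n+1}$, pinning down $|uq_n| = |vq_{n+1}|$. A short comparison of the two summands of $q'\alpha - p'$, using $|q_n| < |q_{n+2}|$, then produces $|q'\alpha - p'| \geq 1/|q_n| > |q_n\alpha - p_n|$ via the ultrametric, so $p_n/q_n$ is a (strict) best approximation.

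The "only if" direction is symmetric. Given a best approximation $p/q$, choose $n$ maximal with $|q_n| \leq |q|$, so $|q| < |q_{n+1}|$. The best-approximation hypothesis applied against $p_n/q_n$ gives $|q\alpha - p| \leq 1/|q_{n+1}|$. In the decomposition of $(p, q)$, either $v = 0$, forcing $p/q = p_n/q_n$ by coprimality of both pairs, or $v \neq 0$, in which case the same cancellation argument yields the contradiction $|q\alpha - p| \geq 1/|q_n| > 1/|q_{n+1}|$.

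The main obstacle is mild and amounts to bookkeeping of cancellations in the ultrametric, which is in fact cleaner than its archimedean analogue because the triangle inequality becomes an equality whenever the two summands have different absolute values. The one subtlety is handling the initial convergents and the case $\alpha \in \F_q(T)$ (finite continued fraction), both of which are resolved by minor adjustments to the choice of $n$.
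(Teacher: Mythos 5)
Your proof is correct, and it is worth noting that the paper itself offers no proof of this theorem: it is simply recalled from \cite{Malagoli}, \S 1.2, so your self-contained argument is genuinely additional content rather than a variant of an in-paper proof. The route you take is the standard unimodularity argument, and the key steps all check out: since $p_nq_{n+1}-p_{n+1}q_n=\pm 1$, the pairs $(p_n,q_n)$ and $(p_{n+1},q_{n+1})$ form an $\F_q[T]$-basis of $\F_q[T]^2$; if $v\neq 0$ then $|vq_{n+1}|\geq|q_{n+1}|>|q'|$ forces $|uq_n|=|vq_{n+1}|$, hence $|u|\geq |q_{n+1}|/|q_n|$, and the relation $|v|/|q_{n+2}|=|u||q_n|/(|q_{n+1}||q_{n+2}|)<|u|/|q_{n+1}|$ isolates the dominant summand, giving $|q'\alpha-p'|=|u|/|q_{n+1}|\geq 1/|q_n|>1/|q_{n+1}|=|q_n\alpha-p_n|$ by \eqref{cont1}. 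This single computation settles the ``if'' direction outright and produces the contradiction in the ``only if'' direction; the coprimality hypothesis is exactly what makes the $v=0$ case of the converse force $u$ to be a unit, so that $p/q=p_n/q_n$. The two deferred items you flag are real but routine: for $\alpha\in\F_q(T)$ the expansion terminates, the last convergent equals $\alpha$, and both directions degenerate to $q\alpha-p=0$; and the smallest indices are handled by the conventions $q_0=1$, $(p_{-1},q_{-1})=(1,0)$. Since the paper only applies the theorem to $\alpha\notin\F_q(T)$ (in Lemma \ref{cont2}), these edge cases do not affect its use.
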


We want to recall the following Lemma $2.1$ from \cite{Kwa2} which was stated for real numbers. The verbatim proof will give the following lemma for function field. The proof uses the fact that convergents are best approximations, which we have by Theorem \ref{cont3}. In what follows $\frac{p_n}{q_n}$ are convergents of $x$.
\begin{lemma}\label{cont2}
Let $\psi \ : [t_0, +\infty) \to \mathbb{R}_{+}$ be non-increasing.
Then $x \in \FF \setminus\mathbb{F}_q(T)$ is $\psi$-Dirichlet if and only if $\vert \langle q_{n-1} x \rangle \vert < \psi(\vert q_n \vert)$ for sufficiently large $n$.
\end{lemma}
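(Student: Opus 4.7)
The plan is to adapt the proof of \cite[Lemma 2.1]{Kwa2} to the function-field setting, using equation \eqref{cont1} and Theorem \ref{cont3}. The key preliminary computation is that, by equation \eqref{cont1}, $|q_{n-1}x - p_{n-1}| = 1/|q_n| < 1$, which in the non-archimedean setting forces $[q_{n-1}x] = p_{n-1}$, so for all $n\ge 1$,
\[
|\langle q_{n-1}x\rangle| = |q_{n-1}x - p_{n-1}| = \frac{1}{|q_n|}.
\]
The lemma is therefore equivalent to the statement that $x$ is $\psi$-Dirichlet if and only if $1/|q_n| < \psi(|q_n|)$ for all sufficiently large $n$.

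For the ``if'' direction, given a large $Q$ I choose the unique index $n$ with $|q_{n-1}| \le Q < |q_n|$ (possible since $|q_k|\to\infty$), and take the pair $(\bq,q_0) = (q_{n-1},-p_{n-1})$. Clearly $|\bq|\le Q$, and
\[
|\bq x + q_0| = \frac{1}{|q_n|} < \psi(|q_n|) \le \psi(Q),
\]
by the hypothesis and the non-increasing property of $\psi$. Hence $x$ is $\psi$-Dirichlet.

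For the converse, I assume $x$ is $\psi$-Dirichlet and apply the condition at values $Q \in [|q_{n-1}|,|q_n|)$; the discreteness of $|\cdot|$ on $\FF$ forces the witness $\bq$ to satisfy $|\bq|\le Q < |q_n|$. The crucial additional ingredient is a ``second-kind'' best-approximation statement: for every $\bq \in \FZ$ with $0 < |\bq| < |q_n|$ and every $q_0 \in \FZ$,
\[
|\bq x + q_0| \ge |q_{n-1}x - p_{n-1}| = \frac{1}{|q_n|}.
\]
Combined with $|\bq x + q_0| < \psi(Q)$, this gives $1/|q_n| < \psi(Q)$ for all such $Q$, from which $|\langle q_{n-1}x\rangle| < \psi(|q_n|)$ follows.

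The main obstacle is the second-kind best-approximation property, which Theorem \ref{cont3} does not provide directly (it covers only $|\bq|\le|q_{n-1}|$). To handle witnesses with $|\bq|$ in the range $(|q_{n-1}|,|q_n|)$---which arises in function fields whenever $\deg a_n \ge 2$---I will use the unimodular change of basis afforded by $p_{n-1}q_n - p_n q_{n-1} = \pm 1$, writing $\bq = \alpha q_{n-1} + \beta q_n$ and $q_0 = -\alpha p_{n-1} - \beta p_n + s$ with $\alpha,\beta,s\in\FZ$; the ultrametric inequality then forces $|\bq x + q_0| \ge 1/|q_n|$ unless $\bq$ is a unit multiple of $q_n$, a case ruled out by $|\bq|<|q_n|$. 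This is the non-archimedean replacement for the classical second-kind best-approximation argument over $\R$.
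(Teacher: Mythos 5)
Your ``if'' direction is correct, and the unimodular change-of-basis argument is a perfectly good way to establish the statement that $\min\{|\bq x + q_0| : 0 < |\bq| < |q_n|,\ q_0\in\FZ\} = 1/|q_n|$; the paper merely cites Theorem~\ref{cont3} and refers to the verbatim proof of \cite[Lemma~2.1]{Kwa2}, so your version actually supplies more detail than the paper does.

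The problem is the last sentence of your converse direction, which hides a genuine gap. You correctly obtain $1/|q_n| < \psi(Q)$ for every $Q \in [|q_{n-1}|, |q_n|)$, and then assert that this ``gives'' $1/|q_n| < \psi(|q_n|)$. But $\psi$ is only non-increasing, so for such $Q$ one has $\psi(Q) \ge \psi(|q_n|)$; the inequality you have proved is therefore \emph{weaker} than the one you want, and the implication fails. You were forced into using $Q < |q_n|$ because Definition~\ref{psidi} imposes the non-strict bound $\|\bq\| \le Q$: if you apply the definition at $Q = |q_n|$, the witness may be a unit multiple of $q_n$, for which $|\bq x + q_0|$ can be as small as $1/|q_{n+1}|$, and the lower bound $1/|q_n|$ no longer holds. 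In fact, with $\psi(Q) = 1/Q$ every $x\in\FF\setminus\F_q(T)$ is $\psi$-Dirichlet under Definition~\ref{psidi} (take $\bq=q_n$ for $|q_n|\le Q<|q_{n+1}|$, giving $1/|q_{n+1}|<1/Q$), yet $|\langle q_{n-1}x\rangle| = 1/|q_n| = \psi(|q_n|)$ is never strictly less than $\psi(|q_n|)$, so the biconditional as stated cannot be proved under the $\le$ convention. The source \cite{Kwa2}, whose proof the paper invokes verbatim, defines $\psi$-Dirichlet with the \emph{strict} constraint on the denominator; with $\|\bq\| < Q$ one applies the definition directly at $Q = |q_n|$, the witness automatically has $|\bq| < |q_n|$, and $1/|q_n| \le |\bq x + q_0| < \psi(|q_n|)$ closes the argument in one line. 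Your proof should therefore either adopt the strict convention, or explicitly flag that the final deduction does not go through under the $\le$ convention without further hypotheses on $\psi$.
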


\subsection{Proof of Theorem \ref{third}} It is easy to see that $\F_q(T)\subset D(\psi)$. We want to show that $D(\psi)\subset \F_q(T)$. By Equation \eqref{cont1} for $x\in \FF\setminus \F_q(T)$ we have $ \vert \langle q_{n-1} x \rangle \vert = \frac{1}{\vert q_n \vert},$ $\forall ~n$. Since $\psi(t)<\frac{1}{t}$ for all large enough $t$, by Lemma \ref{cont2} we conclude that there is no $x\in \FF\setminus \F_q(T)$ such that $x$ is $\psi$-Dirichlet.

\section{Too many vectors with high uniform exponent}\label{too}

 	In this section we study totally irrational singular vectors in submanifolds of $\FF^n$. In dimension $n=1$, Theorem $2.4$ in \cite{AG2} implies that the set of numbers $y$ in $\FF$ that are singular is $\F_q(T)$. 
 	
\indent In order to state the main theorem of this section, we need to define the \textit{irrationality measure} function as follows. We follow the definition in \cite{KNW}. 
\begin{definition}
We define $\Phi: \FZ^n\setminus\{0\} \to \mathbb{R}_{+}$ to be a proper function if the set $\{\bq \in \FZ^n\setminus\{0\} \ : \Phi(\bq) \leq C\}$ is finite for any $C > 0$.
For any arbitrary $\Phi$ and any $\by \in \FF^n$, we define the irrationality measure function $\psi_{\Phi, \by}(t) := \min_{(q_0,\bq)\in\FZ\times \FZ^n\setminus\{\mathbf{0}\}, \Phi(\bq) \leq t}\vert \bq \cdot \by+ q_0 \vert$.
\end{definition}
We can now state one of the main theorems of this section.
\begin{theorem}\label{5t3}
Let $\text{char}(\FF) = p < \infty$. Let $U$ be an open subset of $\FF$. We consider $S$ of the following two types:
\begin{itemize}
\item $S=\{(x,y, p_3(x,y),\cdots,p_n(x,y)) \ \mid \ x, y \in U\}\subset \FF^n$, where each $p_i(x,y)$ is a degree $2$ polynomial.
\item $S=\{(x,y,p(x,y))~|~ x,y\in U\}\subset \FF^3$ where $p(x,y)=\sum_{i=0}^n a_i^{p^i}x^{p^i}+\sum_{j=0}^n b_j^{p^j}y^{p^j}$.\end{itemize} Suppose that $S$ is not contained inside any rational affine hyperplane.
Then for any proper function $\Phi: \FZ^n\setminus\{0\} \to \mathbb{R}_{+}$ and for any non-increasing function $\phi \ : \mathbb{R}_{+} \to \mathbb{R}_{+}$, there exist uncountably many totally irrationals $\by \in S$ such that $\psi_{\Phi,\by}(t) \leq \phi(t)$ for all large enough $t$.
\end{theorem}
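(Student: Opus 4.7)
The plan is a Cantor-scheme construction on the parameter space. I parametrize $S$ by the polynomial map $F\colon U^2\to\FF^n$, $F(x,y)=(x,y,p_3(x,y),\dots,p_n(x,y))$ in the first case and $F(x,y)=(x,y,p(x,y))$ in the second; in both cases every coordinate of $F$ is an element of $\FF[x,y]$ (with Frobenius monomials $x^{p^i}$ read as ordinary polynomials). I enumerate the countable family of rational affine hyperplanes of $\FF^n$ not containing $S$ as $H_1,H_2,\dots$, with defining forms $L_j(\bz)=c_{j,0}+\sum_{i=1}^n c_{j,i}z_i$. The starting observation is that since $L_j\circ F\in\FF[x,y]$ takes a nonzero value somewhere on $S$, it is a nonzero polynomial; a standard one-variable-at-a-time argument over the infinite field $\FF$ then shows that the zero set of any nonzero element of $\FF[x,y]$ has empty interior in $\FF^2$, so in particular every finite union $F^{-1}(H_1)\cup\cdots\cup F^{-1}(H_k)$ has empty interior.

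Fix an increasing sequence $t_1<t_2<\cdots\to\infty$, to be enlarged as needed. I would inductively build a binary tree of closed balls $B_w\subset U^2$ indexed by finite binary strings $w$, together with approximation data $(\bq_w,q_{0,w})\in\FZ^n\times\FZ$ attached to each node, so that at depth $k=|w|$ the following hold: first, $B_{w0}$ and $B_{w1}$ are disjoint sub-balls of $B_w$ with $\diam(B_w)\to 0$; second, $\Phi(\bq_w)\leq t_k$ and $|\bq_w\cdot F(x,y)+q_{0,w}|\leq \phi(t_{k+1})$ for every $(x,y)\in B_w$; third, $B_w$ is disjoint from $F^{-1}(H_1)\cup\cdots\cup F^{-1}(H_k)$. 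To produce the approximation at a node $w$, I apply Dirichlet's theorem in $\FF^n$ at the centre $(x_0,y_0)$ of $B_w$, obtaining $(\bq,q_0)$ with $\|\bq\|\leq Q$ and $|\bq\cdot F(x_0,y_0)+q_0|\leq Q^{-n}$; taking $Q$ large enough makes $Q^{-n}\leq\phi(t_{k+1})/2$, and enlarging $t_k$ yields $\Phi(\bq)\leq t_k$ by properness of $\Phi$ and finiteness of $\{\bq:\|\bq\|\leq Q\}\subset\FZ^n$. A Lipschitz-type estimate on $F$ (trivial for polynomial coordinates; in the Frobenius case using $|x^{p^i}-x_0^{p^i}|=|x-x_0|^{p^i}$) then shows that $|\bq\cdot F(x,y)+q_0|\leq\phi(t_{k+1})$ persists on a sub-ball about $(x_0,y_0)$ of radius depending on $\|\bq\|$. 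Inside that sub-ball the empty-interior lemma supplies a further sub-ball disjoint from all $F^{-1}(H_j)$, $j\leq k$; the ultrametric structure of $\FF^2$ allows this ball to be partitioned into two disjoint sub-balls which I designate $B_{w0}$ and $B_{w1}$.

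For each infinite binary sequence $\omega$, the nested intersection $\bigcap_k B_{\omega|_k}$ is a single point $(x_\omega,y_\omega)\in U^2$ by completeness of $\FF^2$ together with the diameter condition, and I set $\by_\omega:=F(x_\omega,y_\omega)\in S$. The third bullet at every level forces $\by_\omega$ to miss every $H_j$, so $\by_\omega$ is totally irrational. Given $t\geq t_1$, picking $k$ with $t_k\leq t<t_{k+1}$, the second bullet gives $\Phi(\bq_{\omega|_k})\leq t_k\leq t$ and $|\bq_{\omega|_k}\cdot\by_\omega+q_{0,\omega|_k}|\leq\phi(t_{k+1})\leq\phi(t)$ (using that $\phi$ is non-increasing), whence $\psi_{\Phi,\by_\omega}(t)\leq\phi(t)$. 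Distinct $\omega$ yield distinct $\by_\omega$ by disjointness of $B_{w0},B_{w1}$ at the first level of disagreement, so the construction produces $2^{\aleph_0}$-many totally irrational points in $S$ satisfying the required bound.

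The main obstacle, and the source of the restriction to these two families of surfaces, is the empty-interior lemma: for the given $F$, one must verify that $L_j\circ F$ is a genuinely nonzero element of $\FF[x,y]$ whenever $S\not\subset H_j$. This rests on a structural linear-independence property of the coordinate functions of $F$ that is sensitive to positive characteristic, since polynomial identities such as $(u+v)^p=u^p+v^p$ can conspire with Frobenius-type cross-terms to collapse seemingly nontrivial linear combinations. For degree-$2$ coordinates the non-vanishing reduces to a finite-dimensional linear condition equivalent to $S$ not lying in any rational affine hyperplane. For the Frobenius surface the strict separation of $x$- and $y$-powers in $p(x,y)=\sum a_i^{p^i}x^{p^i}+\sum b_j^{p^j}y^{p^j}$ is precisely what guarantees $\FF$-linear independence of $\{1,x,y,x^{p^i},y^{p^j}\}_{i,j}$ in $\FF[x,y]$; mixed Frobenius monomials would require a finer decomposition of $S\cap H$ of the sort provided by the semianalytic tools used in \cite{KNW}, which are unavailable due to the total disconnectedness of $\FF$.
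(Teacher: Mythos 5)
There is a genuine gap at the heart of your inductive step, and it is exactly the point the theorem is about. You propose to obtain the node data $(\bq_w,q_{0,w})$ from Dirichlet's theorem at the centre of $B_w$, choosing $Q$ so large that $Q^{-n}\leq \phi(t_{k+1})/2$, and then ``enlarging $t_k$'' so that $\Phi(\bq_w)\leq t_k$. This is circular: $t_k$ is already committed at stage $k-1$, where you needed $|\bq_{w'}\cdot\by+q_{0,w'}|\leq\phi(t_k)$, and enlarging $t_k$ shrinks $\phi(t_k)$ (since $\phi$ is non-increasing), which forces a larger $Q_{k-1}$, hence a possibly larger $\Phi(\bq_{w'})$, hence a larger $t_{k-1}$, and so on backwards without end. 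One cannot fix this by choosing the sequences more cleverly: writing $C(Q):=\max\{\Phi(\bq):\|\bq\|\leq Q\}$, your two requirements force $t_k\geq C\bigl(\phi(t_{k+1})^{-1/n}\bigr)$, and already in the basic case $\Phi(\bq)=\Vert\bq\Vert$, $\phi(t)=e^{-t}$ this reads $t_{k+1}\leq n\ln t_k$, incompatible with $t_k<t_{k+1}\to\infty$. The point is that $\phi$ may decay arbitrarily fast (this is what makes the conclusion $\hat\omega(\by)=\infty$ possible), whereas Dirichlet's theorem only gives error polynomially small in the height of $\bq$; no pigeonhole argument at a fixed height budget $t_k$ can produce errors of size $\phi(t_{k+1})$. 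The mechanism that actually works — Khintchine's, as implemented in \cite{KNW} and in this paper via Theorem \ref{c} — is to build the limit point as a limit of points lying \emph{on} rational affine hyperplane slices $L_i\subset S\cap A_i$ of controlled height: on such a slice the relevant value $|\bq\cdot\by+q_0|$ is zero, and by continuity it stays below $\phi(t_{k+1})$ on a small neighbourhood, completely decoupling the approximation quality from the height.

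This also explains why your diagnosis of ``the main obstacle'' is off. The fact that $L_j\circ F$ is a nonzero polynomial with nowhere dense zero set holds for essentially any surface not contained in a rational hyperplane, and by itself it only yields total irrationality of the limit point; it carries none of the load. The reason the paper restricts to the two families of surfaces is that Khintchine's scheme needs structural information about the slices $S\cap A$: Lemmas \ref{l44}--\ref{l46} and Proposition \ref{r45} show that, away from finitely many points, $S\cap A$ is a union of $\FF$-analytic curves, and Theorems \ref{5td2} and \ref{5tfin} use this to verify conditions (a)--(d) of Theorem \ref{c} — in particular the density of slices (d), the density of intersections with slices of arbitrarily large height (b), and the fact that no slice is covered by finitely many others (c). These conditions are what let one branch inside a slice, move to a nearby slice of much larger height at the next step, and still avoid every fixed hyperplane in the limit; they are the function-field substitute for the semianalytic decomposition of \cite{EP} used in \cite{KNW}. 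Your Cantor-scheme skeleton (nested balls, branching for uncountability, avoiding $F^{-1}(H_j)$ for irrationality) is compatible with that argument, but the approximation engine has to be replaced by the hyperplane-slice construction; as written, the proof does not go through.
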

As an application of the previous Theorem we get Theorem \ref{plenty}.
\begin{proof}[Proof of Theorem \ref{plenty}]
By taking $\Phi(\bq) = \Vert 
\bq \Vert$,  Theorem \ref{plenty} follows from Theorem \ref{5t3}.
\end{proof}

Let us recall Theorem $1.1$ from \cite{KNW}, which was proved for locally closed subsets of $\R^n$. The same proof verbatim will work for locally closed subsets in  $\FF^n$. It is noteworthy that the proof follows Khintchine's argument in \cite{Kh}. We define $\vert A\vert:=\max_{i=1}^{n+1} \vert a_i\vert$, where $A: a_1x_1+\cdots+a_nx_n=a_{n+1}$, and $(a_1,\cdots,a_{n+1})$ is a primitive vector in $\FZ^{n+1}$.

\begin{theorem}\label{c}
Let $S \subset \FF^{n}$ be a nonempty locally closed subset.
Let $\{L_1,L_2,\dots\}$ and $\{L_1',L_2',\dots\}$ be disjoint collections of distinct closed subsets of $S$, each of which is contained in a rational affine hyperplane in $\FF^n$, and for each $i$ let $A_i$ be a rational affine hyperplane containing $L_i$, assume the following hold:
\begin{enumerate}
\item[(a)] $$\bigcup_i L_i \cup \bigcup_j L_j' = \{x \in S \ : x \text{ is contained in a rational affine hyperplane}\};$$
\item[(b)] For each $i$ and each $\alpha> 0$, $$L_i = \overline{\bigcup_{|A_j| > \alpha} L_i \cap L_j};$$
\item[(c)] For each $i$, and for any finite subsets of indices $F$, $F'$ with $i \not \in F$, we have $$L_i = \overline{L_i - (\bigcup_{k \in F} L_k \cup \bigcup_{k' \in F'}L_{k'}')};$$
\item[(d)] $\bigcup_{i}L_i$ is dense in $S$.
\end{enumerate}\vspace{0.3 in}
\noindent 
Then for arbitrary $\Phi: \FZ^n\setminus\{0\} \to \mathbb{R}_{+}$  proper function and for any non-increasing function $\phi \ : \mathbb{R}_{+} \to \mathbb{R}_{+}$, there exist uncountably many totally irrationals $\by \in S$ such that $\psi_{\Phi,\by}(t) \leq \phi(t)$  for all large enough $t$.
\end{theorem}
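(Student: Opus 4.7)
The plan is to mimic Khintchine's classical construction from \cite{Kh} as adapted to submanifolds by Kleinbock, Moshchevitin, and Weiss in \cite{KNW}. The idea is to build a Cantor-like binary tree of nested non-empty closed balls $B_k \subset S$ with $\diam(B_k) \to 0$. The intersection along each infinite branch will be a single point $\by \in S$, and there will be uncountably many branches, each producing a totally irrational $\by$ satisfying $\psi_{\Phi,\by}(t) \leq \phi(t)$ for all large $t$. As preparation, I would fix an enumeration $\{H_k\}_{k \geq 1}$ of the countable set of rational affine hyperplanes in $\FF^n$ and earmark an increasing threshold sequence $T_k \to \infty$ to be specified along the induction.

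At stage $k$, starting from the closed ball $B_{k-1} \subset S$ chosen at the previous stage, I would use condition (d) to enter $\bigcup_i L_i \cap B_{k-1}$ and condition (b) to pass to a sub-piece $L_{i_k}$ whose carrying hyperplane $A_{i_k}$ has equation $a_1 y_1 + \cdots + a_n y_n = a_{n+1}$ with $|A_{i_k}|$ as large as desired. Setting $\bq_k := (a_1, \ldots, a_n) \in \FZ^n$ and $q_{0,k} := -a_{n+1}$, I would pick $y_k \in L_{i_k} \cap B_{k-1}$; since $\bq_k \cdot y_k + q_{0,k} = 0$, continuity of the linear form lets me choose a closed sub-ball $B_k$ around $y_k$ on which $|\bq_k \cdot \by + q_{0,k}|$ is as small as I like. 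The core quantitative choice is to select $|A_{i_k}|$ and the radius of $B_k$ so that for every $t \in [T_{k-1}, T_k]$ one has $\Phi(\bq_k) \leq t$ and $|\bq_k \cdot \by + q_{0,k}| \leq \phi(t)$ for every $\by \in B_k$; feasibility rests on the non-increasing nature of $\phi$ and the properness of $\Phi$. The intervals $[T_{k-1}, T_k]$ are arranged to exhaust every sufficiently large $t$.

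To force total irrationality and to produce uncountably many limit points, at each stage I would further shrink $B_k$ using condition (c) so that it avoids (i) the hyperplane $H_k$ from the enumeration (intersected with the relevant $L$-pieces it meets) and (ii) the finitely many $L_j, L_j'$ used so far other than $L_{i_k}$; condition (c) is precisely the density statement that makes this avoidance possible, and condition (a) guarantees that a vector avoiding every $H_k$ is not contained in any rational affine hyperplane. Finally I would split $B_k$ into two disjoint closed sub-balls. Completeness of $\FF^n$ delivers a unique limit $\by$ along each infinite branch; by construction each such $\by$ avoids every $H_k$ (hence is totally irrational) and satisfies the desired Diophantine inequality, and distinct branches yield distinct limits.

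The main obstacle is the careful quantitative bookkeeping in the induction step: one must coordinate the choice of $|A_{i_k}|$, the radius of $B_k$, and the thresholds $T_k$ so that successive hyperplanes $A_{i_k}$ together cover every large $t$ with no gap, while simultaneously preserving enough room inside $L_{i_k}$ to invoke conditions (b) and (c) at the next stage. The non-archimedean geometry of $\FF^n$ actually helps, since nested closed balls behave rigidly and sub-balls separate with clean ultrametric gaps; this is why the real-variable argument of \cite{KNW} translates to $\FF^n$ essentially verbatim, as the authors indicate.
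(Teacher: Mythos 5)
Your proposal follows precisely the paper's route: the paper gives no separate proof of Theorem \ref{c}, but instead invokes Theorem~1.1 of \cite{KNW} (a Khintchine-type nested construction) and asserts the argument carries over verbatim to $\FF^n$, and your Cantor-tree scheme using (d) and (b) to locate a new carrying hyperplane, (c) to escape the finitely many previously used $L$-pieces and rational hyperplanes, and (a) to conclude total irrationality is exactly that construction. The one bookkeeping caveat, which you flag yourself, is that the index offsets must be arranged so that $\bq_{k+1}$ is selected \emph{before} the radius of $B_k$ is finalized: one sets $T_k:=\Phi(\bq_{k+1})$ and then shrinks $B_k$ until $|\bq_k\cdot\by+q_{0,k}|\le\phi(T_k)$ on $B_k$, which reconciles the requirement $\Phi(\bq_k)\le T_{k-1}$ with the freedom given by condition (b) to make $|A_{i_{k+1}}|$ (and hence, by properness, $\Phi(\bq_{k+1})$) as large as needed.
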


We will call the property (a), (b), (c), and (d) defined above as ``property A". Let us recall the following Theorem $2.1.1$ in \cite{ZetaIgusa} which we are going to use throughout the rest of this section. 
\begin{theorem}\label{IFT}

Let $K$ be an arbitrary field and assume that for some $m$, $n$ every $F_i(x,y)$ in $F(x,y) = (F_1(x,y),  \dots, F_m(x,y))$ is in $K[[X,Y]] = K[[x_1,\dots,x_n, y_1,\dots, y_m]]$ satisfying $F_i(0,0) = 0$ and further $\frac{\partial (F_1,\dots, F_m)}{\partial (y_1,\dots, y_m)} \mid_{(0,0)} \neq 0$, in which $\frac{\partial (F_1,\dots, F_m)}{\partial (y_1,\dots, y_m)}$ is the Jacobian.
Then there exists a unique $f(x) = (f_1(x),\dots,f_m(x))$ with every $f_i(x)$ in $K[[x]] = K[[x_1,\dots, x_m]]$ satisfying $f_i(0) = 0$ and further $F(x,f(x)) = 0$.

\end{theorem}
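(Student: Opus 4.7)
The plan is to construct $f$ as the unique fixed point of an explicit operator on $\mathfrak m^m$, where $\mathfrak m := (x_1,\dots,x_n) \subset K[[x_1,\dots,x_n]]$ is the maximal ideal, exploiting the $\mathfrak m$-adic completeness of $K[[x_1,\dots,x_n]]$ in place of any analytic machinery. First I would normalize: setting $M := \left.\frac{\partial(F_1,\dots,F_m)}{\partial(y_1,\dots,y_m)}\right|_{(0,0)} \in \Mat_{m\times m}(K)$, the hypothesis $\det M \neq 0$ gives $M \in \GL_m(K)$, so replacing $F$ by $M^{-1}F$ (which does not alter the solutions of $F(x,f(x))=0$) I may assume $\left.\frac{\partial F}{\partial y}\right|_{(0,0)} = I_m$. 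Combined with $F_i(0,0)=0$, this yields a unique decomposition
\[
F_i(x,y) \;=\; y_i + H_i(x) + \sum_{j=1}^m y_j\, P_{ij}(x,y),
\]
with $H_i(x) \in \mathfrak m$ and each $P_{ij}(x,y)$ vanishing at the origin.

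Solving $F(x,f(x))=0$ with $f(0)=0$ then becomes the fixed-point equation $f=T(f)$ on $\mathfrak m^m$, where
\[
T(f)_i \;:=\; -H_i(x) \;-\; \sum_{j=1}^m f_j(x)\, P_{ij}\bigl(x,f(x)\bigr).
\]
The main step is to verify that $T$ sends $\mathfrak m^m$ into itself and is a contraction in the $\mathfrak m$-adic topology: if $f\equiv g \pmod{\mathfrak m^N}$, then
\[
T(f)_i - T(g)_i = -\sum_{j} \Bigl[(f_j-g_j)\,P_{ij}(x,f) + g_j\bigl(P_{ij}(x,f)-P_{ij}(x,g)\bigr)\Bigr],
\]
and each bracketed summand lies in $\mathfrak m^{N+1}$: in the first, $f_j-g_j \in \mathfrak m^N$ while $P_{ij}(x,f)\in\mathfrak m$ since $P_{ij}$ vanishes at the origin and is evaluated on arguments in $\mathfrak m$; in the second, $g_j \in \mathfrak m$, and a termwise substitution in the monomial expansion of $P_{ij}$ in its $y$-variables shows $P_{ij}(x,f)-P_{ij}(x,g) \in \mathfrak m^N$.

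Since $K[[x_1,\dots,x_n]]$ is $\mathfrak m$-adically complete, the iteration $f^{(0)}:=0$, $f^{(k+1)}:=T(f^{(k)})$ is Cauchy and its limit $f\in\mathfrak m^m$ is the unique fixed point of $T$, giving both existence and uniqueness of the required solution. There is no real obstacle here: the whole argument is a purely formal Banach-style fixed-point proof, which is exactly why it works in arbitrary (in particular positive) characteristic and can be applied in the $\FF$-setting of the preceding sections. The only delicate part is the contraction estimate above, which is the combinatorial heart of the proof and is precisely where the nonvanishing of the Jacobian is used.
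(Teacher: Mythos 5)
Your argument is correct and complete. Note, however, that the paper does not prove this statement at all: it is quoted verbatim as Theorem 2.1.1 of Igusa's book on local zeta functions, so there is no in-paper proof to compare against. Igusa's own proof determines the homogeneous components of $f$ degree by degree from the equation $F(x,f(x))=0$; your $\mathfrak{m}$-adic fixed-point formulation is the same computation packaged as a Banach-style contraction, and it is arguably cleaner because existence, uniqueness (via $\bigcap_N \mathfrak{m}^N = 0$), and well-definedness of the recursion all come out of one estimate. The normalization, the decomposition $F_i = y_i + H_i(x) + \sum_j y_j P_{ij}(x,y)$ with $P_{ij}(0,0)=0$, and the contraction estimate are all sound (the telescoping argument for $P_{ij}(x,f)-P_{ij}(x,g)\in\mathfrak{m}^N$ is the right way to handle the monomial substitution, and infinite sums cause no trouble since $\mathfrak{m}^N$ is closed in the $\mathfrak{m}$-adic topology). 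One small quibble: the invertibility of the Jacobian is consumed in the normalization step --- it is what lets you isolate $y_i$ with unit coefficient and force $P_{ij}(0,0)=0$ --- rather than in the contraction estimate itself, which only uses those consequences; as stated your closing remark slightly misattributes where the hypothesis enters.
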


\subsection{ When each $p_i(x,y)$ is a degree $2$ polynomial and $\FF$ is of any positive characteristic}

Let us consider $$S=\{(x,y,p_3(x,y),\cdots,p_n(x,y)) \ \mid \ x,y\in U\},$$ where $U$ is an open subset of $\FF$, and  $$p_i(x,y)=b_{i,1} x^2 + b_{i,2} xy + b_{i,3} y^2 + b_{i,4} x + b_{i,5} y + b_{i,6}$$ with $b_{1,i},b_{i,2},\dots,b_{i,6} \in \FF$ and $b_{i,1},b_{i,2},b_{i,3}$  not being zero simutaneously for $i=3,\cdots,n$. 
Let us take $A$ to be a rational affine hyperplane in $\FF^n$ and we assume that $S$ is not contained inside $A$. We can define 
$A$ by the linear equation $a_1 x_1 + a_2 x_2 + \cdots+a_n x_n = a_{n+1}$, where $(a_1,a_2,\cdots,a_{n+1})\in \mathbb{F}_q[T]^{n+1}$ is primitive. Note that $S\cap A$ is given by the solutions to the equation;
\begin{equation}
    f(x,y):= 0, 
\end{equation} where $$f(x,y)=\sum_{i=3}^n a_ip_i(x,y) + a_1 x + a_2 y - a_{n+1} .$$
We see that $f$ is a polynomial of degree  less than or equal to $2$. Now note that \begin{equation}\label{dx}\frac{\partial f}{\partial x} = \sum_{i=3}^n (2a_i b_{i,1} x + a_i b_{i,2} y) + (a_1 + \sum_{i=3}^n a_i b_{i,4})\end{equation} and 
\begin{equation}\label{dy}
\frac{\partial f}{\partial y} = \sum_{i=3}^n (a_i b_{i,2} x + 2 a_i b_{i,3} y) + (a_2 + \sum_{i=3}^n a_i b_{i,5}).\end{equation}
If $\frac{\partial f}{\partial x}(x_0,y_0) \neq 0$, then by Theorem \ref{IFT} we get a neighborhood of $(x_0,y_0)$, where  $y$ is a $\FF$-analytic function of $x$.
If $\frac{\partial f}{\partial y}(x_0,y_0) \neq 0$ then locally we can write $x$ as a $\FF$-analytic function of $y$.
Hence in order to find out all possible $(x_0,y_0)$ such that there is no neighborhood of $(x_0,y_0,p_3(x_0,y_0),\cdots,p_n(x_0,y_0)) \in S\cap A$ that is analytic curve in $S\cap A$,  we consider the linear system \begin{equation}\label{system}\begin{cases}  \frac{\partial f}{\partial x} = 0;\\ \frac{\partial f}{\partial y} = 0.\end{cases}\end{equation} The corresponding coefficient matrix $M \in \text{Mat}_{2 \times 2}(\FF)$ of the system is $$\begin{bmatrix}\sum_{i=3}^n 2a_ib_{i,1} & \sum_{i=3}^n a_ib_{i,2} \\ \sum_{i=3}^n a_ib_{i,2} & \sum_{i=3}^n 2a_ib_{i,3} \end{bmatrix}.$$ First note that if  $a_3,\cdots,a_n$ are zero, then $S\cap A$ is an analytic curve as the equation of $A$ would be $a_1x+a_2y=a_{n+1}$. Therefore one of $a_3,\cdots,a_n$ must be nonzero, and without loss of generality we assume that $a_{3}\neq 0$. Next let us denote, $$b_k= \sum_{i=3}^n  a_ib_{i,k}$$ for $k=1,\cdots,6$.
With this setting, we have the following lemma.

\begin{lemma}
\label{l44}
If $b_2^2\neq 4  b_1b_3$, then there are at most finitely many points of $A \cap S$ such that their neighbourhood is not an $\FF$-analytic curve in $\FF^n$.
\end{lemma}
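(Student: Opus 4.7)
The plan is to show that the hypothesis $b_2^2 \neq 4b_1 b_3$ forces the linear system \eqref{system} to have at most one solution $(x_0, y_0)\in\FF^2$, and then to invoke Theorem \ref{IFT} at every other point of $A\cap S$ to produce an $\FF$-analytic curve through that point. First I compute the determinant of the coefficient matrix $M$. If $\text{char}(\FF)\neq 2$, then $\det M = 4b_1 b_3 - b_2^2$, which is nonzero by hypothesis. If $\text{char}(\FF)=2$, the factors of $2$ in \eqref{dx} and \eqref{dy} disappear, so $M=\begin{pmatrix}0 & b_2\\ b_2 & 0\end{pmatrix}$ and $\det M = b_2^2$; now the hypothesis reads $b_2^2\neq 4b_1b_3=0$, i.e.\ $b_2\neq 0$, so again $\det M\neq 0$. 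In either case, \eqref{system} has a unique solution $(x_0,y_0)\in\FF^2$.

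Next, consider any $(x^\ast, y^\ast)\in U\times U$ with $f(x^\ast, y^\ast)=0$ and $(x^\ast,y^\ast)\neq (x_0,y_0)$. At least one of $\partial f/\partial x(x^\ast,y^\ast)$ or $\partial f/\partial y(x^\ast,y^\ast)$ is nonzero, since otherwise $(x^\ast,y^\ast)$ would be a second solution of \eqref{system}. Setting $F(X,Y) := f(x^\ast+X, y^\ast+Y)$ and applying Theorem \ref{IFT} with the roles of the two variables chosen so that the relevant partial derivative is nonzero at $(0,0)$, one obtains locally either $y$ as an $\FF$-analytic function of $x$ or $x$ as an $\FF$-analytic function of $y$ on some open neighborhood of $(x^\ast, y^\ast)$. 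Composing with the quadratic polynomials $p_3,\ldots,p_n$, the slice $A\cap S$ is then realized as an $\FF$-analytic curve in $\FF^n$ near $(x^\ast,y^\ast,p_3(x^\ast,y^\ast),\ldots,p_n(x^\ast,y^\ast))$.

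Consequently, the only point of $A\cap S$ whose neighborhood could fail to be an $\FF$-analytic curve in $\FF^n$ is $(x_0,y_0,p_3(x_0,y_0),\ldots,p_n(x_0,y_0))$, and then only provided this point actually belongs to $A\cap S$; so there is at most one bad point, in particular finitely many. The main (albeit minor) technical subtlety is the unified treatment of odd and even characteristic in the determinant calculation above: in characteristic $2$ the quadratic-in-$x$ and quadratic-in-$y$ contributions to $\partial f/\partial x$ and $\partial f/\partial y$ vanish, so one has to verify directly that the hypothesis $b_2^2\neq 4b_1 b_3$ still makes $M$ invertible. Everything else reduces to a single application of the implicit function theorem to the single equation $f(x,y)=0$.
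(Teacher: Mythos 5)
Your proof is correct and follows essentially the same route as the paper: compute $\det M = 4b_1b_3 - b_2^2$, note it is nonzero by hypothesis so the system \eqref{system} has at most one solution, and apply Theorem \ref{IFT} at every other point of $A\cap S$ (the paper carries out the implicit-function-theorem step in the discussion preceding the lemma and leaves only the determinant computation in the proof itself). Your separate characteristic-$2$ check is harmless but not needed, since the identity $\det M = 4b_1b_3 - b_2^2$ holds formally in any characteristic and the hypothesis is literally $\det M \neq 0$.
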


\begin{proof}
Note that 
 $\det(M) = 4 {b_1}{b_3}-b_2^2$.
Hence by the hypothesis, we know that the system has only one solution. This completes the proof.

\end{proof}

From the proof above we know that the key is to solve the following equations; \begin{equation}\label{crucial1}\begin{aligned}
 f(x,y) & = \sum_{i=3}^n a_i p_i(x,y)+ a_1 x + a_2 y - a_{n+1}\\
 & = b_1 x^2+b_2xy+b_3y^2+(a_1+b_4)x+(a_2+b_5)y+(b_6-a_{n+1})=0 , \end{aligned}\end{equation}
 \begin{equation}\label{crucial2}\frac{\partial f}{\partial x}(x,y) = 2{b_1}x+{b_2}y+(a_1+{b_4})=0 ,\end{equation} \begin{equation}\label{crucial3} \frac{\partial f}{\partial y}(x,y) = b_2 x + 2 b_3 y + (a_2 + b_5) = 0.\end{equation}

\begin{lemma}
\label{l45}
If \begin{equation}\label{w0}b_2^2= 4 b_1b_3\end{equation} and $b_2 \neq 0$, then there are at most finitely many points of $A \cap S$ that its neighbourhood is not an $\FF$-analytic curve in $\FF^n$.
\end{lemma}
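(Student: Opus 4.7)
The plan is to exploit the degeneracy $b_2^2 = 4b_1b_3$ (which makes the quadratic form $b_1 x^2 + b_2 xy + b_3 y^2$ a perfect square of a linear form) in order to collapse the locus $A\cap S$ into a union of at most two parallel lines in the $(x,y)$-plane, each of which is trivially an $\FF$-analytic curve. First I observe that the combined hypothesis $b_2\neq 0$ and $b_2^2 = 4b_1b_3$ forces $\operatorname{char}(\FF)\neq 2$ (otherwise $4b_1 b_3=0$ would give $b_2=0$), and in particular both $b_1$ and $b_3$ are nonzero, which allows free division by $4b_1$.

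Next I would introduce the substitution $\ell_1 := 2b_1 x + b_2 y + (a_1+b_4) = \partial f/\partial x$ and expand $f$ in the linearly independent pair $(\ell_1, y)$, via $x = (\ell_1 - b_2 y - (a_1+b_4))/(2b_1)$. All $y^2$ and $\ell_1 y$ cross-terms cancel — precisely because $4b_1b_3 = b_2^2$ — and a direct computation yields the normal form
\[
f(x,y) = \frac{\ell_1^2}{4b_1} + \alpha\, y + \beta, \qquad \alpha := (a_2+b_5)-\tfrac{b_2(a_1+b_4)}{2b_1},\quad \beta := (b_6-a_{n+1})-\tfrac{(a_1+b_4)^2}{4b_1},
\]
so that $\partial f/\partial x = \ell_1$ and $\partial f/\partial y = \tfrac{b_2}{2b_1}\ell_1 + \alpha$.

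If $\alpha \neq 0$, then on the line $\{\ell_1 = 0\}$ one has $\partial f/\partial y = \alpha \neq 0$, while off this line $\partial f/\partial x = \ell_1 \neq 0$; so at every point of $A\cap S$ the gradient of $f$ is nonzero, and Theorem \ref{IFT} exhibits a neighborhood of each such point in $A\cap S$ as an $\FF$-analytic curve in $\FF^n$. If $\alpha = 0$, then $f = \ell_1^2/(4b_1) + \beta$ and $\{f = 0\}$ is governed by $\ell_1^2 = -4b_1\beta$: this set is either empty (when $-4b_1\beta$ is a non-square in $\FF$), a pair of disjoint parallel lines $\{\ell_1 = \pm d\}$ with $d\neq 0$ (when $-4b_1\beta = d^2 \neq 0$; here $d \neq -d$ since $\operatorname{char}(\FF)\neq 2$) on each of which $\partial f/\partial x = \pm d \neq 0$ so that Theorem~\ref{IFT} again applies, or the single line $\{\ell_1 = 0\}$ (when $\beta = 0$) which, using $b_2\neq 0$, can be globally parametrized polynomially by $x\mapsto (x,\,y(x),\,p_3(x,y(x)),\dots,p_n(x,y(x)))$ with $y(x) = -(2b_1 x + (a_1+b_4))/b_2$. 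In every case every point of $A\cap S$ has an $\FF$-analytic neighborhood, so the set of bad points is in fact empty.

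The main obstacle, compared with Lemma~\ref{l44}, is precisely that when $\det M = 0$ the common zero locus of $\partial f/\partial x$ and $\partial f/\partial y$ can be an entire line rather than a single point, so one cannot directly invoke finiteness of that locus. The resolution is exactly the normal-form reduction above: perfect-squareness of the quadratic part of $f$ degenerates $\{f=0\}$ into at most two parallel lines, and the assumption $b_2\neq 0$ then provides a polynomial parametrization of each such line, making it an $\FF$-analytic curve in $\FF^n$.
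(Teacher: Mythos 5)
Your proof is correct and follows essentially the same strategy as the paper's: both exploit $b_2^2=4b_1b_3$ to write the quadratic part of $f$ as a perfect square in the linear form $2b_1x+b_2y$, conclude that either the gradient of $f$ never vanishes on $A\cap S$ (so Theorem \ref{IFT} applies everywhere) or $A\cap S$ is the single analytic line $y=-(2b_1x+(a_1+b_4))/b_2$; your conditions $\alpha\neq 0$ and $\beta\neq 0$ are exactly the paper's conditions $b_2(a_2+b_5)\neq 2b_3(a_1+b_4)$ and $b_2^2(b_6-a_{n+1})\neq b_3(a_1+b_4)^2$ in different coordinates. Your explicit normal form and the remark that the hypotheses force $\operatorname{char}(\FF)\neq 2$ (hence $b_1,b_3\neq 0$) are a slightly cleaner packaging of the same argument.
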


\begin{proof}
Suppose that there exists no point that satisfies System \eqref{system}, then conclusion of the lemma holds trivially. 

Now suppose that there exists a point $(x_0,y_0)$ that satisfies System \eqref{system}. Using  Equation \eqref{w0} we have, \begin{equation}\label{w1}
b_2(a_2 + b_5) \stackrel{\eqref{crucial3}}{=} -b_2^2 x_0 - 2 b_3(b_2 y_0) \stackrel{\eqref{crucial2}}{=} -b_2^2 x_0 + 2 b_3(2 b_1 x_0 + (a_1 + b_4)) \stackrel{\eqref{w0}}{=} 2b_3(a_1 + b_4).\end{equation}
For any $x,y \in U$, 
\begin{equation}\label{w4}
\begin{aligned}
b_2^2f(x,y) & \stackrel{\eqref{crucial1}}{=} b_2^2(b_1 x^2 + b_2 xy + b_3 y^2) + b_2^2((a_1+b_4)x+(a_2+b_5)y) + b_2^2(b_6 - a_{n+1}) \\
& \stackrel{\eqref{w0}}{=} b_3(2b_1 x + b_2 y)^2 + b_2^2((a_1+b_4)x+(a_2+b_5)y)  + b_2^2(b_6-a_{n+1})\\
& \stackrel{\eqref{w1}}{=} b_3(2b_1 x + b_2 y)^2 + b_2(a_1+b_4)(b_2x+2b_3y) + b_2^2(b_6-a_{n+1}).
\end{aligned}
\end{equation}

For any point that satisfies System \eqref{system},
\begin{equation}\label{fform}\begin{aligned}
&b_3(2b_1 x + b_2 y)^2 + b_2(a_1+b_4)(b_2x+2b_3y) + b_2^2(b_6-a_{n+1}) \\
& \stackrel{\eqref{crucial2}, \eqref{crucial3}}{=} b_3(a_1 + b_4)^2 - b_2(a_1 + b_4)(a_2 + b_5) + b_2^2(b_6 - a_{n+1}) \\
& \stackrel{\eqref{w1}}{=} b_2^2(b_6 - a_{n+1}) - b_3(a_1 + b_4)^2.
\end{aligned}
\end{equation}

Suppose that $b_2^2(b_6-a_{n+1}) \neq b_3(a_1+b_4)^2$. 
We know that Equation \eqref{crucial1} would never be satisfied for any points satisfying the System \eqref{system}. Therefore there is no point in $S\cap A$ such that its neighborhood is not an $\FF$-analytic curve.

Now suppose that \begin{equation}\label{w3}b_2^2(b_6-a_{n+1}) = b_3(a_1+b_4)^2.\end{equation}
Let $g(x) := -\frac{2b_1 x + (a_1 + b_4)}{b_2}$ and
$\gamma$ be $\{(x,g(x),p_3(x,g(x)),\cdots,p_n(x,g(x))) \ \mid \ x \in U\}$. Then clearly any point in $\gamma$ satisfies \eqref{crucial2}, \eqref{crucial3} and using \eqref{w3} one can see that any point in $\gamma$ also satisfies \eqref{crucial1}. Hence we have $\gamma \subseteq S \cap A$. Note here Equations \eqref{crucial2} and \eqref{crucial3} are essentially the same. 

Now for any point in $S\cap A$, we have $$ \begin{aligned}& f(x,y)=0\\ & ~ \stackrel{\eqref{w4}}{\implies} b_3(2b_1 x + b_2 y)^2 + b_2(a_1+b_4)(b_2x+2b_3y) + b_2^2(b_6-a_{n+1})=0\\
& \stackrel{\eqref{w3}, \eqref{w0}}{\implies} b_3 (2b_1 x+b_2y +a_1+b_4)^2=0\\
& ~\implies y= -\frac{2b_1 x + (a_1 + b_4)}{b_2}.
\end{aligned}$$ The last equality holds because $b_3\neq 0 $ as $b_2\neq 0$.
 Since $\gamma$ is already an analytic curve, this completes the proof of the lemma.  

\end{proof}

\begin{lemma}
\label{l46}
If $b_2^2= 4 b_1b_3$ and $b_2 = 0$, then there are at most finitely many points of $A \cap S$ that its neighbourhood is not an $\FF$-analytic curve in $\FF^n$.
\end{lemma}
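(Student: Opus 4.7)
The plan is to split into cases based on $\text{char}(\FF)$, exploiting that $b_2 = 0$ simplifies the partials of $f$ to $\partial_x f = 2 b_1 x + (a_1 + b_4)$ and $\partial_y f = 2 b_3 y + (a_2 + b_5)$. As in the proofs of Lemma \ref{l44} and Lemma \ref{l45}, a point of $A \cap S$ admits an $\FF$-analytic curve neighborhood as soon as one of the two partials is nonvanishing there, by the implicit function theorem (Theorem \ref{IFT}). So the task reduces to analyzing the locus where both partials vanish and $f = 0$, and showing that this locus either lies on an analytic curve in $S \cap A$ or is finite.

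First I would handle the case $\text{char}(\FF) \neq 2$. Here the hypothesis $b_2^2 = 4 b_1 b_3$ forces $b_1 b_3 = 0$, so assume without loss of generality $b_3 = 0$ (the case $b_1 = 0$ is symmetric). Then $f(x,y) = b_1 x^2 + (a_1+b_4) x + (a_2 + b_5) y + (b_6 - a_{n+1})$. If $a_2 + b_5 \neq 0$, then $\partial_y f$ is a nonzero constant everywhere, and Theorem \ref{IFT} provides an analytic curve neighborhood at every point of $S \cap A$. If $a_2 + b_5 = 0$, then $f$ depends only on $x$, and its at most two roots $x = x_0$ give a union of at most two ``vertical'' pieces $\{(x_0, y, p_3(x_0,y), \dots, p_n(x_0,y)) : y \in U\}$, each an $\FF$-analytic curve. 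The degenerate subcase $b_1 = b_3 = 0$ makes $f$ affine linear (or identically zero, contradicting $S \not\subset A$), and $S \cap A$ is either an analytic line or empty.

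The main obstacle is the case $\text{char}(\FF) = 2$. Here the formal partials collapse to $\partial_x f = a_1 + b_4$ and $\partial_y f = a_2 + b_5$, both constants, and Theorem \ref{IFT} still resolves the subcase where at least one is nonzero. When both vanish, $f(x,y) = b_1 x^2 + b_3 y^2 + (b_6 - a_{n+1})$ is purely inseparable, so the implicit function theorem provides no information and one cannot expect a local analytic parametrization in the naive sense. The key observation is that in characteristic $2$ the map $\Phi(x,y) := b_1 x^2 + b_3 y^2$ is additive on $\FF^2$, because squaring is a ring homomorphism; hence if $(x_0, y_0)$ is any $\FF$-solution of $f = 0$, then the full $\FF$-solution set is the coset $(x_0, y_0) + \ker \Phi$.

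Finally I would analyze $\ker \Phi = \{(u,v) \in \FF^2 : b_1 u^2 + b_3 v^2 = 0\}$. If $b_1 = b_3 = 0$, then $f$ is constant and is either nowhere zero or identically zero, the latter contradicting $S \not\subset A$. If exactly one of $b_1, b_3$ vanishes, the kernel is a coordinate axis, so $S \cap A$ is either empty or a single coordinate line, hence an analytic curve. If $b_1, b_3$ are both nonzero, the equation becomes $(u/v)^2 = b_3/b_1$, which has a nonzero $\FF$-solution if and only if $b_3/b_1$ is a square in $\FF$; in that case $\ker \Phi$ is a one-dimensional $\FF$-subspace and $S \cap A$ is an affine line, hence an analytic curve, while otherwise $\ker \Phi = \{(0,0)\}$ and $S \cap A$ consists of at most one point. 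In every case the set of points in $S \cap A$ whose neighborhood is not an $\FF$-analytic curve is finite, completing the proof.
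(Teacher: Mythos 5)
Your proof is correct and follows essentially the same case decomposition as the paper: characteristic $\neq 2$ reduces to a quadratic in one variable giving at most two ``vertical'' analytic curves, and characteristic $2$ reduces to the purely inseparable form $b_1x^2+b_3y^2+(b_6-a_{n+1})$, split according to whether $b_3/b_1$ is a square. Your packaging of the characteristic-$2$ case via additivity of $\Phi(x,y)=b_1x^2+b_3y^2$ and the coset $(x_0,y_0)+\ker\Phi$ is just a cleaner restatement of the paper's explicit identity $b_1(x_1-x_0)^2=b_3(y_1-y_0)^2$ and its linear parametrization $x(t)=x_0+t$, $y(t)=y_0+\alpha t$, so the two arguments coincide in substance.
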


\begin{proof}

If $b_1=0,b_2=0,b_3=0$ and there exists one point whose neighborhood is not an $\FF$-analytic curve, then $S\cap A$ being nonempty implies $S\cap A=A$, because $f(x,y)= b_6-a_{n+1}=0$. This contradicts the standing assumption that $S$ is not contained inside $A$.\\
If $\text{char}(\FF) \neq 2$, then $b_1 b_3 = 0$. Since $b_1$, $b_2$ and $b_3$ cannot be zero simultaneously, assume without loss of generality that $b_1 \neq 0$ and $b_3 = 0$. 

Suppose that there exists no point that satisfies System \eqref{system}, then conclusion of the lemma holds.
So let us assume that there exists a point $(x_0,y_0)$ that satisfies System \eqref{system}, then Equation \eqref{crucial3} gives us $a_2+b_5=0$.
By Equation \eqref{crucial1}, we have 
$$\begin{aligned} &b_1 x^2 + (a_1 + b_4) x + (b_6 - a_{n+1})=0.\end{aligned}$$
At most two $x$ can satisfy the above equation, 
say they are $x_1$ and $x_2$ respectively.
Then $$\gamma_1 = \{(x_1,y,p_3(x_1,y),\cdots,p_n(x_1,y)) \ \mid \ y \in U\}$$ and $$\gamma_2 = \{(x_2,y,p_3(x_2,y),\cdots,p_n(x_2,y)) \ \mid \ y \in U\}$$ are both analytic curves and they are inside $S\cap A$. 

In addition, since 
$$f(x,y) = b_1 x^2 + (a_1 + b_4)x + (b_6 - a_{n+1}),$$
we know that any point on $S \cap A$ must have an $x$-coordinate equal to $x_1$ or $x_2$, which means that it is in $\gamma_1$ or $\gamma_2$.
In other words, $S \cap A = \gamma_1 \sqcup \gamma_2$.
Thus the proof is complete for $\mathrm{char}(\FF)\neq 2$.

If $\text{char}(\FF) = 2$, then $\frac{\partial f}{\partial x} = a_1 +  b_4$ and $\frac{\partial f}{\partial y} = a_2 + b_5$.
If either of the two is nonzero then the conclusion of the lemma holds. Otherwise, for every $x,y$  we have $$f(x,y) = b_1 x^2 + b_3 y^2 + (b_6 - a_{n+1}).$$ 
Since $b_1$, $b_2$ and $b_3$ cannot be zero simultaneously, assume without loss of generality that $b_3 \neq 0$. 
Suppose $\frac{b_1}{b_3}$ is not square. Now if we have two points $(x_0,y_0,p_3(x_0,y_0),\cdots, p_n(x_0,y_0))$  and $(x_1,y_1,p_3(x_1,y_1),\cdots,p_n(x_1,y_1))$ in $S\cap A$, then $$\begin{aligned}
& \ \ \ \ \ \ \ b_1 x_0^2+b_3y_0^2= b_1x_1^2+b_3y_1^2\\
& \implies b_1(x_1-x_0)^2=b_3(y_1-y_0)^2.\end{aligned}$$ The above gives a contradiction to the assumption that $\frac{b_1}{b_3}$ is not a square. Hence in this case there could be at max one point in $S\cap A$. \\
\indent Now let us assume $\frac{b_1}{b_3}=\alpha^2$ for some $\alpha\in \FF$.
Any point in $S \cap A$ satisfies the equation $b_1 x^2 + b_3 y^2 = (a_{n+1} - b_6)$.
This is equivalent to $\alpha^2 x^2 + y^2 = \beta$, where $\beta = \frac{a_{n+1} - b_6}{b_3}$.
Suppose that $(x_0,y_0,p_3(x_0,y_0),\cdots, p_n(x_0,y_0))$ is a point in $S \cap A$, which implies $\alpha^2 x_0^2 + y_0^2 = \beta$.
Suppose that $$(x(t),y(t),p_3(x(t),y(t)),\cdots, p_n(x(t),y(t)))$$ is in $S \cap A$.
For any $t \in \FF$, let $x(t) = x_0 + t$.
Then the corresponding $y(t)$ can be given as $(y(t))^2 = \beta - \alpha^2 (x(t))^2 = (\beta - \alpha^2 x_0^2) - \alpha^2  t^2 = y_0^2 + \alpha^2  t^2$.
Therefore $y(t) = y_0 + \alpha t$, and this shows that $S \cap A$ gives an analytic curve.

\end{proof}
Combining Lemma \ref{l44}, Lemma \ref{l45} and Lemma \ref{l46} we get the following proposition.
\begin{proposition}\label{prop41}
Let $S=\{(x,y,p_3(x,y),\cdots, p_n(x,y))~|~x,y\in U\}$, where $U$ is an open set of $\FF$, and each $p_i(x,y)$ is a degree $2$ polynomial and $A$ be an affine rational hyperplane in $\FF^n$. Suppose that $S$ is not contained inside $A$ then there are at most finitely many points of $ S\cap A$ such that its neighbourhood is not an $\FF$-analytic curve in $\FF^n$.
\end{proposition}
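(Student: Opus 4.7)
The plan is to reduce the statement to the three preceding lemmas by a simple trichotomy on the quadratic part of the defining polynomial. Recall that $S \cap A$ is cut out inside the parameter space $U \times U$ by the single equation $f(x,y)=0$ from Equation \eqref{crucial1}, where $f$ is a polynomial of degree at most two whose quadratic part has coefficients $b_1,b_2,b_3$. By Theorem \ref{IFT}, as soon as one of $\partial f/\partial x$ or $\partial f/\partial y$ is nonzero at a point $(x_0,y_0)$, a neighborhood of the image point $(x_0,y_0,p_3(x_0,y_0),\dots,p_n(x_0,y_0))$ in $S\cap A$ is an $\FF$-analytic curve in $\FF^n$. So my task is to control the locus of common zeros of the two partial derivatives \eqref{crucial2} and \eqref{crucial3}.

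I would then split on the discriminant-like quantity $b_2^2 - 4b_1 b_3$. The three mutually exclusive and exhaustive cases are: (i) $b_2^2 \neq 4b_1 b_3$, where the coefficient matrix $M$ of the linear system $\nabla f = 0$ is nonsingular and so has at most one solution, handled by Lemma \ref{l44}; (ii) $b_2^2 = 4b_1 b_3$ with $b_2 \neq 0$, where either no common zero of $\nabla f$ exists on $S\cap A$ or the identity $b_2^2 f(x,y) = b_3(2b_1 x + b_2 y + a_1 + b_4)^2$ from \eqref{w4}--\eqref{fform} forces $S\cap A$ to equal a single analytic curve $\gamma$, handled by Lemma \ref{l45}; and (iii) $b_2^2 = 4b_1 b_3$ with $b_2 = 0$, where the remaining nonzero coefficient(s) among $b_1,b_3$ force $f$ to depend quadratically on at most one variable (or, in characteristic two, to reduce to $b_1 x^2 + b_3 y^2 = a_{n+1}-b_6$ after the linear terms vanish), again yielding finitely many bad points, handled by Lemma \ref{l46}. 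Note that in the degenerate subcases of Lemmas \ref{l45} and \ref{l46} in which $S\cap A$ itself is a (disjoint) union of analytic curves, there are no bad points at all, so the finiteness conclusion is trivially satisfied.

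The main obstacle has already been addressed inside the three lemmas: the case analysis in Lemma \ref{l46} is the most delicate, because when $\operatorname{char}(\FF)=2$ one cannot complete the square and must instead argue separately depending on whether the ratio $b_1/b_3$ is a square in $\FF$, exploiting Frobenius to write $b_1(x_1-x_0)^2 = b_3(y_1-y_0)^2$ and parameterize $S\cap A$ explicitly as $y = y_0 + \alpha t$. Once these three lemmas are in hand, the proposition follows immediately by disjunction on the two scalars $b_2^2 - 4b_1 b_3$ and $b_2$.
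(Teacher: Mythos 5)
Your proposal is correct and follows the paper's argument exactly: the proposition is obtained by combining Lemmas \ref{l44}, \ref{l45} and \ref{l46}, which together exhaust the trichotomy on $b_2^2-4b_1b_3$ and $b_2$ (with the fully degenerate case $b_1=b_2=b_3=0$ absorbed into Lemma \ref{l46} via the assumption that $S\not\subset A$). Nothing further is needed.
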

 By the above proposition we have that $S\cap A\setminus J= \cup_{\bz\in S\cap A} \gamma(\bz)$, where $J$ is a finite set of points which do not have an $\FF$-analytic curve as neighborhood, and $\gamma(\bz)$ is an $\FF$-analytic curve which is a neighborhood of $\bz$ in $S\cap A$. Also note that $\gamma(\bz)$ is open and closed. Since $S\cap A\setminus J$ is a second countable space, we know that there exists a countable subcovering $\gamma_j$, i.e. $S\cap A\setminus J= \cup_{i} \gamma_i$.
 
\begin{theorem}\label{5td2}
Let $S$ be as in the previous proposition. 
There exist $\{L_i\}$, $\{L_j'\}$, $\{A_j\}$ as mentioned in Theorem \ref{c}, that satisfy property A.
\end{theorem}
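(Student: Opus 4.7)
The plan is to take $\{L_i\}$ to be the family of $\FF$-analytic curves produced by Proposition~\ref{prop41} as the hyperplane $A$ ranges over all rational affine hyperplanes, and $\{L_j'\}$ to be the exceptional singular points from these decompositions that do not happen to lie on any such curve. First I would enumerate the countably many rational affine hyperplanes $\{A^{(m)}\}_{m \geq 1}$ in $\FF^n$ that meet $S$ but do not contain $S$, and for each $m$ apply Proposition~\ref{prop41} to obtain a decomposition $S \cap A^{(m)} = J^{(m)} \sqcup \bigsqcup_{k=1}^{N_m} \gamma_k^{(m)}$ with $J^{(m)}$ finite and each $\gamma_k^{(m)}$ an $\FF$-analytic curve that is open and closed in $S \cap A^{(m)} \setminus J^{(m)}$. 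Let $\{L_i\}$ list, without repetition, the distinct sets arising as some $\gamma_k^{(m)}$; for each $L_i$ fix a rational affine hyperplane $A_i$ containing it. Let $\{L_j'\}$ list the points of $\bigcup_m J^{(m)}$ that lie on no $L_i$. Both families are countable collections of distinct closed subsets of $S$, and the two families are disjoint.

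Properties (a) and (d) are essentially immediate. For (a), any rational point $x \in S$ lies in $S \cap A^{(m)}$ for some $m$, so by the decomposition $x$ is either on an analytic curve component (in some $L_i$) or in $J^{(m)}$; in the latter case it is either on some $L_i$ arising from a different hyperplane, or a designated $L_j'$. For (d), the hyperplanes $\{x_1 = p/q\}$ with $p/q \in \mathbb{F}_q(T) \cap U$ are rational and cut $S$ in the single analytic curve $\{(p/q, y, p_3(p/q,y), \dots, p_n(p/q,y)) : y \in U\}$, which is one of the $L_i$; since $\mathbb{F}_q(T)$ is dense in $\FF$, these curves jointly meet a dense subset of $S$.

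For property (b), fix $L_i$, $\alpha > 0$ and $p \in L_i$. Because $L_i$ is one-dimensional, one of the source coordinates $x, y$ parametrizing $S$ varies along $L_i$ near $p$; assume it is $x$ and write $x_0$ for the $x$-coordinate of $p$, so that $L_i$ is locally the graph of an $\FF$-analytic function over an open $V \ni x_0$. I then choose rationals $p_n/q_n \in \mathbb{F}_q(T)$ in lowest terms with $p_n/q_n \to x_0$ and $|q_n| \to \infty$. The hyperplane $A'_n = \{x_1 = p_n/q_n\}$ has primitive defining equation $q_n x_1 - p_n = 0$, so $|A'_n| = \max(|p_n|, |q_n|) \to \infty$, and $S \cap A'_n$ is the single analytic curve $\{(p_n/q_n, y, \dots) : y \in U\}$, say $L_{j(n)}$. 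Its intersection with $L_i$ contains the point of $L_i$ with $x$-coordinate $p_n/q_n$, which tends to $p$; hence $p \in \overline{\bigcup_{|A_j|>\alpha} L_i \cap L_j}$ for every $\alpha$. If instead $x$ is constant along $L_i$ near $p$, the same argument works with the hyperplanes $\{x_2 = p/q\}$.

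For property (c), fix $L_i$ and finite index sets $F, F'$ with $i \notin F$. For each $k \in F$, $L_k$ is an $\FF$-analytic curve distinct from $L_i$; parametrizing both locally by a common variable presents $L_i \cap L_k$ as the zero set of a nonzero convergent power series over $\FF$, and by the identity theorem for $\FF$-analytic functions (Weierstrass preparation applies in the non-archimedean setting) this zero set is locally finite. Each $L_{k'}'$ removes at most one point from $L_i$. Hence the total set removed from $L_i$ is locally finite, and since $L_i$ is parametrized by an open subset of the topologically perfect space $\FF$, $L_i$ has no isolated points; removing a locally finite subset therefore leaves a dense subset. The main obstacle I anticipate is exactly this last ingredient — controlling how distinct $\FF$-analytic curves in $S$ meet — which is where the non-archimedean identity theorem for convergent power series over $\FF$ has to do the work that connectedness and the ordinary real identity theorem do in the analogous argument of \cite{KNW}.
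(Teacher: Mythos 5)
Your construction assigns the families differently from the paper: you put \emph{every} analytic curve piece $\gamma_k^{(m)}$, from every rational hyperplane, into $\{L_i\}$ and relegate only the leftover singular points to $\{L_j'\}$, whereas the paper takes $\{L_i\}$ to be only the full sections $S\cap\{x_1=a\}$, $S\cap\{x_2=a\}$ with $a\in\F_q(T)$ (after shrinking $S$ so these are genuine curves) and puts all remaining curve pieces into $\{L_j'\}$. Your treatment of (a), (d) and (b) is essentially sound and (b) is the same density-of-rationals-with-large-denominator argument as the paper's; just make sure that for the curve $S\cap\{x_1=p_n/q_n\}$ you \emph{designate} $A_{j(n)}=\{x_1=p_n/q_n\}$ as its hyperplane, since condition (b) refers to the chosen $A_j$, not to every hyperplane containing $L_j$.

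The genuine gap is in property (c), and it is caused precisely by your choice of $\{L_i\}$. The decomposition in Proposition \ref{prop41} is not canonical: $S\cap A\setminus J$ is covered by neighbourhoods $\gamma(\bz)$ and one passes to \emph{some} countable subcover, so the pieces $\gamma_k^{(m)}$ may overlap, and pieces coming from two different hyperplanes $A^{(m)}, A^{(m')}$ whose intersection meets $S$ in a curve may share an open arc (in the ultrametric setting two parametrizing balls are either disjoint or nested, so such an overlap is typically a sub-ball's worth of the curve). If $L_k\cap L_i$ contains a nonempty open subset of $L_i$ with $L_k\neq L_i$, then $\overline{L_i\setminus L_k}\neq L_i$ and (c) fails for $F=\{k\}$. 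Your appeal to the non-archimedean identity theorem does not repair this: Weierstrass preparation controls the zero set of $g-h$ only when the two curves are distinct as analytic germs on the common ball, and here they agree identically on the overlap. You would need to make the pieces canonical (e.g.\ maximal, pairwise either disjoint or equal) before the identity-theorem argument applies, and even then pieces of a common curve lying in two different hyperplanes remain problematic. The paper sidesteps all of this: its $L_i$ are the full coordinate sections, any two of which meet in at most one point, and for a coordinate section $L_i=S\cap\{x_1=a\}$ and a piece $L'_{k'}\subset S\cap A$ the intersection is contained in the solution set of the single-variable equation $a_1a+a_2y+\sum_{i\ge 3}a_ip_i(a,y)=a_{n+1}$ of degree at most $2$, hence is finite. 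If you want to keep your assignment, you must add an argument ruling out overlapping pieces; otherwise you should adopt the paper's split.
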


\begin{proof}
Let $\{A_i\}$ be the set of affine rational hyperplanes normal to one of $x$-axis or $y$-axis in $\FF^n$.
By possibly replacing $S$ with a smaller restriction on $x$ and $y$, we can ensure that for any $\zeta \in S$, $T_{\zeta}S$ is normal to $x$ and $y$-axis.
Now let us define $L_i = S \cap A_i$,
which are closed subsets and curves of $S$.

Next we define $\{L_j'\}$.
For any affine rational hyperplane $A$ that has a nonempty intersection with $S$, 
by proposition \ref{prop41}, we have that  $S\cap A$ is union of $\gamma_j$, excluding finitely many points.
Let $\{L_j'\} = \{\gamma_j : \forall i, \gamma_j \not \subset L_i\}$. Now we want to verify that these collections satisfy four hypotheses of Theorem \ref{c}.

Property (a) of Theorem \ref{c} follows directly from how we defined these sets.\\
\indent First let us consider those $L_i=S\cap A_i$, where $A_i: x=a$, $a\in \F_q(T)$. Now let us consider $A_{j_k}:y= \frac{b}{T^k}$, where $b\in \FZ$. Since $L_i=\{(a,y,p_3(a,y),\cdots,p_n(a,y)|y\in U\}$, and $\{\frac{b}{T^k}|k\geq m\}$ for some $m\in \N$, is dense in $U$, property (b) follows. \\
\indent Let $F$ and $F'$ are as in hypothesis (c) of Theorem \ref{c}. Let $$L_i=\{(a,y,p_3(a,y)\cdots,p_n(a,y))~|~x,y\in U\},$$ where $a\in \F_q(T)$. Note that each $L_i \cap L_k$ with $k \in F$ is either empty or consists of only a single point. 
For any $k' \in F'$, by the equation given above, $L_i \cap L_{k'}'=L_i\cap \gamma_{k'}$. We can write $\gamma_{k'}$ is a subset of $ a_1x+a_2y+\sum_{i=3}^n a_ip_i(x,y)-a_{n+1}=0$, and therefore, $L_i\cap\gamma_{k'}$ is a subset of $a_1a+a_2y+\sum_{i=3}^n a_ip_i(a,y)-a_{n+1}=0$. Hence only finitely many solution is possible and $L_i\cap L'_{k'}$ has no interior. The same proof will work when $L_i=A_i\cap S$ where $A_i$ is normal to $y$-axis. \\
\indent To verify property (d) of Theorem \ref{c} it is enough to observe that $S\cap A_i$ looks like $$(x,a,p_3(x,y),\cdots,p_n(x,y)~|~ x,y\in U\}$$ or $$(b,y,p_3(x,y),\cdots,p_n(x,y)~|~ x,y\in U\},$$ where $a,b\in \F_q(T)$.
Clearly they form a dense set in $S$.
\end{proof}

\subsection{A special case in higher degree $p(x,y)$}
\begin{proposition}
\label{r45}
Let $\text{char}(\FF) = p < +\infty$. Let $S = \{(x,y,p(x,y))~|~x,y\in U\}$, where $p(x,y) = \sum_{i=0}^{m} b_{i}^{p^i} x^{p^i} + \sum_{j=0}^{n} c_{j}^{p^j} y^{p^j}$ and $U$ is an open subset in $\FF$. 
There are at most finitely many points of $S \cap A$ such that its neighbourhood is not an $\FF$-analytic curve in $\FF^3$.
\end{proposition}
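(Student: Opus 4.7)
The plan is to write the defining equation of $S \cap A$ inside the chart $U \times U \to \FF^3$ given by the parametrization $(x,y) \mapsto (x, y, p(x,y))$, and then exploit the characteristic-$p$ identity that derivatives of $x^{p^k}$ and $y^{p^k}$ vanish for $k \geq 1$. Writing $A: a_1 x_1 + a_2 x_2 + a_3 x_3 = a_4$ with primitive $(a_1,a_2,a_3,a_4)\in\FZ^4$, the pullback is
$$f(x,y) := a_1 x + a_2 y + a_3\, p(x,y) - a_4.$$
If $a_3 = 0$ then $(a_1, a_2) \neq 0$ and $f$ is a nontrivial affine-linear function of $x,y$, so Theorem \ref{IFT} yields an $\FF$-analytic curve at every point of $\{f=0\}$; lifting by the parametrization gives an analytic curve in $\FF^3$. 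So assume $a_3 \neq 0$.

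The key observation is that in characteristic $p$,
$$\frac{\partial f}{\partial x} = a_1 + a_3 b_0, \qquad \frac{\partial f}{\partial y} = a_2 + a_3 c_0,$$
are both \emph{constants}, since each $x^{p^i}$ and $y^{p^j}$ with $i,j\geq 1$ has zero partial derivative. If either constant is nonzero, Theorem \ref{IFT} applies at every point of $\{f = 0\}$ and we are done with no bad points. Otherwise $a_1 = -a_3 b_0$ and $a_2 = -a_3 c_0$, the linear terms of $f$ cancel, and the freshman's dream $(\alpha+\beta)^p = \alpha^p + \beta^p$ lets us rewrite
$$f(x,y) \;=\; a_3\, u(x,y)^p - a_4, \quad u(x,y) := \sum_{i=1}^{m} b_i^{p^{i-1}} x^{p^{i-1}} + \sum_{j=1}^{n} c_j^{p^{j-1}} y^{p^{j-1}}.$$
Thus $\{f=0\}$ coincides with $\{u^p = a_4/a_3\}$. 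If $a_4/a_3$ is not a $p$-th power in $\FF$, then $S \cap A = \emptyset$; otherwise there is a unique $\gamma \in \FF$ with $\gamma^p = a_4/a_3$ and the equation reduces to $u(x,y) = \gamma$.

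I would then iterate on $u - \gamma$: this polynomial has exactly the same structural shape as $p(x,y)$ (a separated sum of $p$-power monomials in $x$ and in $y$), but every exponent is divided by $p$. Its partials $\partial u/\partial x = b_1$ and $\partial u/\partial y = c_1$ are again constants, so either Theorem \ref{IFT} finishes the job or both vanish, in which case the Frobenius extraction is applied again. Because the top exponents drop by a factor of $p$ each round, the procedure terminates in at most $\max(m,n)$ rounds; at termination either the equation is inconsistent (so $S \cap A = \emptyset$) or a linear term survives and Theorem \ref{IFT} applies at every point of $S\cap A$. In every case the set of bad points is in fact empty, hence certainly finite. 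The main obstacle I anticipate is clean bookkeeping in the iteration: the ambient field $\F_q((T^{-1}))$ is not perfect, so at each Frobenius step one must check whether the current right-hand side has a $p$-th root in $\FF$ (otherwise the set is already empty) and record the correctly reduced polynomial before iterating again.
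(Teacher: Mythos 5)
Your proposal is correct and follows essentially the same route as the paper: pull back $A$ to $f(x,y)=0$, note that in characteristic $p$ the partial derivatives are constants, and when both vanish repeatedly extract a Frobenius ($p$-th) root of the equation --- checking at each stage that the constant term actually has a $p$-th root in the imperfect field $\FF$, else $S\cap A=\emptyset$ --- until a surviving linear term lets Theorem \ref{IFT} exhibit all of $S\cap A$ as a single analytic curve. Your bookkeeping of the coefficients inside the $p$-th power (writing $b_i^{p^{i-1}}x^{p^{i-1}}$ so that its $p$-th power is $b_i^{p^i}x^{p^i}$) is in fact cleaner than the paper's displayed formulas.
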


\begin{proof} 
Without loss of generality let us assume that $S \cap A$ is nonempty, and that there exists at least one point of $S \cap A$ whose neighbourhood is not an $\FF$-analytic curve in $\FF^3$. This implies $a_3\neq 0$, and we can also assume $a_3=1$ after normalization.  This implies that $a_1+b_{0}=a_2+ c_{0}=0$.
Suppose also without loss of generality that $m \geq n$, $b_m \neq 0$, and $c_n \neq 0$.
The intersection $S\cap A$ is given by $
f(x,y) = 0,
$ where 
$$\begin{aligned}f(x,y) &= \sum_{i=1}^{m} b_i^{p^i} x^{p^i} + \sum_{j=1}^{n} c_j^{p^j} y^{p^j}-a_{4}\\
& =\left (\sum_{i=1}^m b_i^{p^i} x^{p^{i-1}} + \sum_{j=1}^n c_j^{p^j} y^{p^{j-1}}\right)^p-a_4\end{aligned}$$ Since $S\cap A$ is nonempty, $a_4$ must have a $p$-th root, say it is $a_{4,0}^p=a_4$. Therefore, we have that $S\cap A$ is given by $$f_0(x,y):=\sum_{i=1}^m b_i^{p^i} x^{p^{i-1}} + \sum_{j=1}^n c_j^{p^j} y^{p^{j-1}}-a_{4,0}.$$
If $S\cap A$ has one point which has no neighbourhood that is an $\FF$-analytic curve, we have $b_1=c_1=0$. Hence we can write  $f_0(x,y)=0$ as, $$
f_0(x,y) = \left(\sum_{i=2}^{m} b_{i}^{p^i} x^{p^{i-2}} + \sum_{j=2}^{n} c_{j}^{p^j} y^{p^{j-2}}\right)^p-a_{4,0}.$$ Since $S\cap A$ is nonempty $a_{4,0}$ must have a $p$-th root, say it is $a_{4,1}^p=a_{4,0}$. Since $f_0(x,y)=(f_1(x,y))^p$, where $$f_1(x,y):=\sum_{i=2}^{m} b_{i}^{p^i} x^{p^{i-2}} + \sum_{j=2}^{n} c_{j}^{p^j} y^{p^{j-2}}-a_{4,1},$$ we have that $S\cap A$ is defined by $f_1(x,y)=0$. 

We use an induction to derive the desired results.
For any $k \in \N$ satisfying $1 \leq k \leq n-1$, $ b_{1} = \dots = b_{k-1} = 0$, and $c_{1} = \dots = c_{k-1} = 0$, assume that $S \cap A$ is given by $f_k(x,y) = 0$, where $$
\begin{aligned}& f_k(x,y) = \sum_{i=k+1}^{m} b_{i}^{p^i} x^{p^{i-k-1}} + \sum_{j=k+1}^{n} c_{j}^{p^j} y^{p^{j-k-1}}-a_{4,k}
\end{aligned}$$
for some $a_{4,k} \in \FF$.
We have $\frac{\partial f_k}{\partial x} = b_{k+1}$ and $\frac{\partial f_k}{\partial y} = c_{k+1}$.
By System \eqref{system}, we derive that $b_{k+1} = c_{k+1} = 0$.
Now since
$$
\begin{aligned} f_k(x,y)
& = \sum_{i=k+2}^{m} b_{i}^{p^{i}} x^{p^{i-k-1}} + \sum_{j=k+2}^{n} c_{j}^{p^j} y^{p^{j-k-1}}-a_{4,k} \\
& = \left(\sum_{i=k+2}^{m} b_{i}^{p^i} x^{p^{i-k-2}} + \sum_{j=k+2}^{n} c_{j}^{p^j} y^{p^{j-k-2}}\right)^p - a_{4,k}, 
\end{aligned}$$
and by our assumption that $S \cap A$ is nonempty, we know that $a_{4,k}$ must have a $p$-th root, say it is $a_{4,(k+1)} \in \FF$, i.e., $a_{4,(k+1)}^p = a_{4,k}$.
Define $$f_{k+1}(x,y) := \sum_{i=k+2}^{m} b_{i}^{p^i} x^{p^{i-k-2}} + \sum_{j=k+2}^{n} c_{j}^{p^j} y^{p^{j-k-2}} - a_{4,(k+1)}.$$
Then 
$$
f_k(x,y)=
 (f_{k+1}(x,y))^p.
$$
This implies that the intersection $S \cap A$ is given by the equation $f_{k+1}(x,y) = 0$.

Repeating the steps above we see that the intersection $S \cap A$ is given by $f_{n-2}(x,y) = 0$.
Also from the induction above, we see that 
$$\begin{aligned}
f_{n-2}(x,y) & = c^{p^n}_{n} y - a_{4,n-2} + \sum_{i=n}^{m} b_{i}^{p^i} x^{p^{i-n+1}}.
\end{aligned}$$
This tells us that $S \cap A$ is completely given by the curve $$\{(x,g(x),p(x,g(x))) \ \mid \ x\in U\},$$ where $$g(x) := \frac{1}{c_n^{p^n}}\left(a_{4,n-2} - \sum_{i=n}^{m} b_{i}^{p^i} x^{p^{i-n+1}}\right), \text{ which is analytic.}$$

\end{proof}
The exact same proof as Theorem \ref{5td2} with suitable changes  gives the following theorem.
\begin{theorem}\label{5tfin}
Let $S = \{(x,y,\sum_{i=0}^m b_i^{p^i} x^{p^i} + \sum_{j=0}^n c_j^{p^j} y^{p^j})\}$, where $x,y,b_i,c_j \in \FF$, and not all $b_i$s or $c_j$s are being zero. 
Then there exists $\{L_i\}$, $\{L_j'\}$, $\{A_j\}$ that satisfies property A.
\end{theorem}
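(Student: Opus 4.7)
The plan is to mirror the construction in Theorem \ref{5td2} verbatim, replacing the role of Proposition \ref{prop41} with Proposition \ref{r45}. First I would take $\{A_i\}$ to be the collection of rational affine hyperplanes in $\FF^3$ normal to the $x$-axis or the $y$-axis, i.e. hyperplanes of the form $\{x=a\}$ or $\{y=a\}$ with $a \in \F_q(T)$, and set $L_i := S \cap A_i$. Each such $L_i$ is a rational analytic curve, parametrized either as $y\mapsto (a,y,p(a,y))$ or $x\mapsto (x,a,p(x,a))$, which is manifestly $\FF$-analytic since $p$ is a polynomial. For the second family, given any other rational affine hyperplane $A$ meeting $S$ non-trivially, Proposition \ref{r45} shows that $S \cap A$ is, outside a finite set, covered by $\FF$-analytic curves; since $\FF^3$ is second countable a countable subcover $\{\gamma_j\}$ exists, and I would set $\{L_j'\}$ to be those $\gamma_j$ not contained in any $L_i$.

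Next I would check property A. Property (a) is immediate from the construction together with Proposition \ref{r45}, since every point of $S$ in a rational affine hyperplane either lies on one of the coordinate-axis hyperplanes (thus in some $L_i$), in one of the analytic curves $\gamma_j$ (thus in some $L_j'$), or in the exceptional finite set --- and the latter points always lie on some $L_i$ once we note that if $(x_0,y_0,p(x_0,y_0))$ is such an exception then intersecting with $\{x=x_0\}$ gives an $L_i$ through it. For property (d), note that the union $\bigcup_i L_i$ consists of all points of $S$ whose $x$- or $y$-coordinate lies in $\F_q(T)$; since $\F_q(T)$ is dense in $\FF$, this union is dense in $S$. For property (b), fix $L_i=\{x=a\}\cap S$ and pick a sequence $b_k/T^k \to y_0 \in U$ with $b_k\in\FZ$; then the hyperplanes $A_{j_k}=\{y=b_k/T^k\}$ have $|A_{j_k}|\to\infty$, and the intersections $L_i\cap L_{j_k}=\{(a,b_k/T^k,p(a,b_k/T^k))\}$ accumulate on $(a,y_0,p(a,y_0))$, giving the required denseness.

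The main obstacle --- and the one place where the argument requires actual use of the specific form of $p$ --- is verifying property (c). For $L_i = \{x=a\}\cap S$ and any finite collection $F,F'$ with $i\notin F$, the intersections $L_i \cap L_k$ for $k\in F$ are each either empty or a single point, since two distinct axis-normal hyperplanes intersect along a $0$-dimensional fiber in $S$. For $k'\in F'$, writing the hyperplane defining $\gamma_{k'}$ as $a_1x+a_2y+a_3 p(x,y)=a_4$ with primitive coefficients, the intersection $L_i\cap L_{k'}'$ is cut out by the equation $a_1 a + a_2 y + a_3 p(a,y) = a_4$ in the single variable $y$. I would argue exactly as in Proposition \ref{r45} that this equation either has only finitely many solutions (in which case the intersection is nowhere dense in $L_i$) or else is identically zero on $L_i$, which would force $L_i \subset L_{k'}'$; the latter possibility is excluded by the definition of $\{L_j'\}$. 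The symmetric argument handles $L_i=\{y=a\}\cap S$, and this establishes property (c). An application of Theorem \ref{c} then yields the theorem.
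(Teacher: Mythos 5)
Your proposal follows exactly the route the paper indicates: the paper's entire proof of Theorem~\ref{5tfin} is the one-line remark that ``the exact same proof as Theorem~\ref{5td2} with suitable changes'' applies, and your argument is a spelled-out version of those changes, substituting Proposition~\ref{r45} for Proposition~\ref{prop41} and otherwise mirroring the construction of $\{L_i\}$, $\{L_j'\}$, $\{A_j\}$ and the four verifications exactly as in Theorem~\ref{5td2}. So the approach is the same.

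One local flaw in your write-up is worth flagging. In your treatment of property (a) you claim that an exceptional point $(x_0,y_0,p(x_0,y_0))$ ``always lies on some $L_i$'' because $\{x=x_0\}$ gives an $L_i$ through it. That only works if $x_0\in\F_q(T)$; for irrational $x_0$ the hyperplane $\{x=x_0\}$ is not rational and hence is not among the $A_i$. The claim is unnecessary here, though: the proof of Proposition~\ref{r45} shows that as soon as a non-smooth point of $S\cap A$ exists, the full intersection collapses to a single analytic curve $\{(x,g(x),p(x,g(x)))\}$, so every point in fact has an analytic neighborhood and the exceptional set is empty for this class of $p(x,y)$. Likewise, in your argument for (c), if the fiber equation $a_1a+a_2y+a_3p(a,y)=a_4$ is identically zero in $y$ the forced inclusion is $\gamma_{k'}\subset L_i$ (the identically vanishing coefficients force $a_2=a_3=0$, so $A$ is a coordinate-axis hyperplane and $S\cap A=L_i$), not $L_i\subset L_{k'}'$ as you wrote; your conclusion that this case is excluded by the definition of $\{L_j'\}$ is nonetheless correct.
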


We now have everything we need to prove the main theorem of this section:

\begin{proof}[Proof of Theorem \ref{5t3}]
 Theorem \ref{5td2} and Theorem \ref{5tfin} guarantee that the hypotheses of Theorem \ref{c} are met by surfaces considered in Theorem \ref{5t3}. Therefore by Theorem \ref{c}, Theorem \ref{5t3} follows. 
\end{proof}

\subsection{Surface to higher dimensional manifold}

The following theorem is somewhat an analogue to Lemma $3.5$ in \cite{KNW}. 

\begin{theorem}\label{surface}
Suppose that for $k \geq 3$ and $B(0,1)$ is the open and closed  ball in $\FF$ of radius $1$. Let $M$ be a $k$-dimensional submanifold of $\FF^n$ which is the image of $B(0,1)^k$ under an immersion $\f \ : B(0,1)^k\to \FF^n$.
Suppose that $(\f,\lambda_k)$ is nonplanar, where $\lambda_k$ is the Lebesgue measure in  $B(0,1)^k$.
Then there exists $\by \in B(0,1)^{d-2}$ such that the surface $ M_{\by}:=\f_{\by}((B(0,1)^2)$ is not contained inside an rational affine hyperplane, where $\f_{\by} \ : B(0,1)^{2}\to \FF^n$, $\f_\by(x_1,x_2) = \f(x_1,x_2,\by)$.
\end{theorem}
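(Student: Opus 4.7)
The plan is to follow the measure-theoretic line of Lemma~3.5 in \cite{KNW}: since there are only countably many rational affine hyperplanes in $\FF^n$, and since (as I will show) each gives rise to a measure-zero set of bad parameters $\by$, a generic $\by$ avoids all of them. The main tool I will need is a non-archimedean identity principle for analytic functions on the polydisc, which plays the role of the real-analytic fact ``vanishing on a set of positive measure in a connected open set implies identically zero.''

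Concretely, enumerate the rational affine hyperplanes in $\FF^n$ as $\{A_i\}_{i \in \N}$, which is possible because $\FZ$ is countable. For each $A_i$ defined by $\sum_{j=1}^n a_j^{(i)} x_j = a_{n+1}^{(i)}$ with $(a_1^{(i)},\dots,a_{n+1}^{(i)}) \in \FZ^{n+1}$ primitive, set
$$h_i(x_1,\dots,x_k) := \sum_{j=1}^n a_j^{(i)} f_j(x_1,\dots,x_k) - a_{n+1}^{(i)}$$
on $B(0,1)^k$, and define
$$E_i := \bigl\{\by \in B(0,1)^{k-2} : h_i(x_1,x_2,\by) = 0 \text{ for all } (x_1,x_2)\in B(0,1)^2\bigr\}.$$
Then $\by \in E_i$ iff $M_\by \subset A_i$, so it suffices to produce a $\by \in B(0,1)^{k-2} \setminus \bigcup_i E_i$.

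The heart of the argument is the claim $\lambda_{k-2}(E_i) = 0$ for every $i$. Assume otherwise. Then $h_i$ vanishes on $B(0,1)^2 \times E_i$, a subset of $B(0,1)^k$ of positive $\lambda_k$-measure. Invoking the identity principle---an analytic function on the polydisc $B(0,1)^k$ over $\FF$ that vanishes on a set of positive Haar measure vanishes identically---I conclude $h_i \equiv 0$, and hence $M \subset A_i$, contradicting the nonplanarity of $(\f,\lambda_k)$. The identity principle itself follows by Fubini-based induction on the number of variables from the one-variable case, which in turn is a consequence of Weierstrass preparation over $\FF$: a non-zero convergent power series on $B(0,1)$ factors as a unit times a polynomial, hence has only finitely many zeros, so vanishing on a set of positive Haar measure forces the series to be zero. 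Once the claim is established, $\lambda_{k-2}\bigl(\bigcup_i E_i\bigr) = 0$ by countable subadditivity, while $B(0,1)^{k-2}$ itself has positive Haar measure, so any $\by$ in the complement works.

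The main obstacle I foresee is making the identity principle air-tight in positive characteristic: one must be careful about what class of analytic functions the components $f_j$ lie in (convergent power series on the closed polydisc $B(0,1)^k$), and about the Fubini step in the induction, which requires that almost every fiber of a positive-measure set has positive measure. These are standard facts but require verification because, unlike in the real case, the field $\FF$ is totally disconnected, and connectedness of the parameter space cannot be used to propagate analytic identities; the measure-theoretic formulation bypasses this issue entirely.
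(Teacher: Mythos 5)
Your overall strategy --- countably many rational affine hyperplanes, each contributing a small set of bad parameters $\by$ --- is the right skeleton, and it is also the skeleton of the paper's argument. But the way you certify smallness is where the proof breaks. Your key step is the implication ``$h_i$ vanishes on a set of positive $\lambda_k$-measure $\Rightarrow h_i\equiv 0$'', which you justify by an identity principle for convergent power series (Weierstrass preparation over $\FF$). That identity principle is fine for genuinely analytic functions, but the theorem does not assume $\f$ is $\FF$-analytic: the hypothesis is only that $\f$ is an immersion, and the remark immediately following the theorem stresses that analyticity is deliberately \emph{not} assumed --- that is precisely why the hypothesis is strengthened from ``$M$ is not contained in a rational affine subspace'' to ``$(\f,\lambda_k)$ is nonplanar''. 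For a merely $C^1$ immersion the functions $h_i$ can vanish on a set of positive measure without vanishing identically, so your claim $\lambda_{k-2}(E_i)=0$ is unproved in the stated generality, and with it the whole countable-subadditivity step collapses.

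The paper's proof avoids this entirely by using a topological rather than measure-theoretic notion of smallness. If every $\by$ were bad, then $B(0,1)^k=\bigcup_A\f^{-1}(A\cap M)$, a countable union of closed sets (only continuity of $\f$ is used); since $B(0,1)^k$ is compact, the Baire category theorem forces some $\f^{-1}(A\cap M)$ to contain an open ball, and $\f$ mapping an open ball into a proper rational affine subspace is exactly what nonplanarity forbids. Note also that if one reads ``nonplanar'' in the measure-theoretic sense ($\lambda_k(\f^{-1}(A))=0$ for every proper affine subspace $A$), your argument can be repaired by simply deleting the identity-principle detour: $B(0,1)^2\times E_i\subset\f^{-1}(A_i)$ already has positive $\lambda_k$-measure by Fubini whenever $\lambda_{k-2}(E_i)>0$, which contradicts nonplanarity directly. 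Either way, the analytic-continuation machinery is both unnecessary and, under the stated hypotheses, unavailable.
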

\begin{proof}
We will prove by contradiction. Suppose for every $\by\in B(0,1)^{k-2}$ there exists a rational affine subspace $A_\by$ such that $B(0,1)^2\times \{\by\}\subset \f^{-1} (A_\by\cap M) \implies B(0,1)^k= \cup_{A\text{ is a rational affine subspace}}  \f^{-1}(A\cap M).$ By Baire category theorem there is one $A$ such that $\f^{-1}(A\cap M)$ contains an open ball inside $B(0,1)^k$. This contradicts the fact that $(\f,\lambda_k)$ is nonplanar.  
\end{proof}
\begin{remark}
 \leavevmode
\begin{enumerate}
\item In the above theorem we don't need to consider the manifold to be analytic but we need a stronger assumption that $(\f,\lambda)$ is nonplanar as compared to the manifold being not inside an affine hyperplane.
\item The above theorem shows it is enough to prove Theorem \ref{plenty} for surfaces. 

\end{enumerate}
\end{remark}

\subsection{ Product of perfect sets}
Let us recall that a subset of $\FF$ is called perfect if it is compact and has no isolated points. If $L=\{\by\in \FF^n~|~ \sum_{i=1}^n a_iy_i=a_0\}$, we can define $\vert L\vert:=\max_{i=1}^n \vert a_i\vert$. The proof of the next proposition will be exactly the same as the proof of Theorem 1.6 in \cite{KNW}.
\begin{proposition}\label{perf}
Let $n \geq 2$ and let $S_1,\dots,S_n$ be perfect subsets of $\FF$ such that $\mathbb{F}_q(T) \cap S_1$ is dense in $S_1$ and $\mathbb{F}_q(T) \cap S_2$ is dense in $S_2$.
Let $S = \prod_{j=1}^{n} S_j$.
Then there exist a collection of $\{L_i\}$, $\{L_j'\}$, $\{A_i\}$ of $S$ that satisfy property A.
\end{proposition}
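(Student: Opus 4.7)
The plan is to mirror the construction of Theorem 1.6 in \cite{KNW}, taking advantage of the product structure of $S$ and the density of $\mathbb{F}_q(T)$ in the first two factors $S_1$ and $S_2$. First I would define $\{A_i\}$ to be the collection of all rational affine hyperplanes in $\FF^n$ that are normal either to $\be_1$ or to $\be_2$ and that meet $S$ in at least one point; these are precisely the hyperplanes of the form $\{x_1 = a\}$ with $a \in \mathbb{F}_q(T) \cap S_1$ or $\{x_2 = b\}$ with $b \in \mathbb{F}_q(T) \cap S_2$. Set $L_i := S \cap A_i$, which is just a slice of the form $\{a\} \times S_2 \times S_3 \times \cdots \times S_n$ or $S_1 \times \{b\} \times S_3 \times \cdots \times S_n$. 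For every other rational affine hyperplane $A$ intersecting $S$ nontrivially, put $S \cap A$ into the collection $\{L_j'\}$. Condition (a) then holds by construction, since every $\bx \in S$ lying in \emph{any} rational affine hyperplane is captured either by an axis-normal slice or by one of the $L_j'$.

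For (d), note that the union $\bigcup_i L_i$ contains every point $(x_1, x_2, \ldots, x_n) \in S$ whose first coordinate lies in $\mathbb{F}_q(T) \cap S_1$. Since this set is dense in $S_1$ by hypothesis, the full union $\bigcup_i L_i$ is dense in the product $S_1 \times S_2 \times \cdots \times S_n = S$. For (b), fix $L_i$, say with $A_i : x_1 = a$, and an arbitrary threshold $\alpha > 0$. A point $(a, y_2, y_3, \ldots, y_n) \in L_i$ has $y_2 \in S_2$, which we can approximate by a sequence $\frac{p_k}{q_k} \in \mathbb{F}_q(T) \cap S_2$ with $|q_k| \to \infty$ (using density together with discreteness of $\FZ$, which forces the denominators to blow up). Each hyperplane $A_{j_k} : x_2 = \frac{p_k}{q_k}$ satisfies $|A_{j_k}| = |q_k|$, which eventually exceeds $\alpha$, and the approximants $(a, \frac{p_k}{q_k}, y_3, \ldots, y_n)$ lie in $L_i \cap L_{j_k}$; letting $k$ vary gives the required density.

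The main work sits in (c), and this is where perfectness is used essentially. Given $L_i$ (say $A_i : x_1 = a$) and finite index sets $F \not\ni i$ and $F'$, the removed pieces fall into two types. For $k \in F$, either $A_k$ is also normal to $\be_1$, in which case $A_k \cap A_i = \emptyset$ and the intersection drops out, or $A_k$ is normal to $\be_2$, say $A_k : x_2 = b$, in which case $L_i \cap L_k = \{a\} \times \{b\} \times S_3 \times \cdots \times S_n$, a proper slice of $L_i$ in the $x_2$-direction; since $S_2$ is perfect it has no isolated points, so $\{b\}$ is nowhere dense in $S_2$ and hence $L_i \cap L_k$ has empty interior in $L_i$. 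For $k' \in F'$, the set $L_i \cap L'_{k'}$ is cut out inside $L_i$ by a nontrivial rational linear equation in the coordinates $x_3, \ldots, x_n$ (together with the frozen $x_1 = a$); because $L'_{k'}$ is not among the $L_i$, this equation is genuinely nontrivial on $L_i$, and perfectness of the remaining factors $S_j$ again forces the zero set to be nowhere dense in $L_i$. Taking finite unions preserves nowhere denseness by the Baire category theorem (valid since $L_i$ is a closed subset of the locally compact space $\FF^n$, hence a Baire space), which yields (c).

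The main obstacle I anticipate is the verification of (c) for the $L'_{k'}$ pieces: one must argue that a single rational linear equation in coordinates ranging over perfect sets cannot contain an open subset of the slice $L_i$. I would handle this by fixing the linear functional, observing that on the product $\{a\} \times S_2 \times \cdots \times S_n$ at least one coefficient among the nonfrozen coordinates is nonzero (else $L'_{k'}$ would already be $L_i$ itself, contradicting its placement in the primed collection), and then using perfectness of the corresponding $S_j$ to rule out interior points. The rest of the argument is a routine transcription of \cite{KNW}.
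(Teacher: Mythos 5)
Your construction is correct and is essentially the same argument the paper relies on: the paper simply states that the proof is verbatim that of Theorem 1.6 in \cite{KNW}, i.e.\ take the axis-normal rational slices $x_1=a$, $x_2=b$ as the $L_i$, put all remaining hyperplane sections into $\{L_j'\}$, and use density of $\mathbb{F}_q(T)\cap S_1$, $\mathbb{F}_q(T)\cap S_2$ together with perfectness of the factors to verify (a)--(d) exactly as you do. The only cosmetic imprecision is in (b), where the blow-up of the heights $|A_{j_k}|$ comes from perfectness of $S_2$ (supplying infinitely many distinct rational approximants, of which only finitely many have bounded height) rather than from discreteness of $\FZ$ alone; this does not affect the validity of the argument.
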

Thus we have the following theorem combining Theorem \ref{c} and Proposition \ref{perf}.
\begin{theorem}\label{perfset}
Let $n \geq 2$ and let $S_1,\dots,S_n$ be perfect subsets of $\FF$ such that $(\mathbb{F}_q(T) \cap S_1)$ is dense in $S_1$ and $(\mathbb{F}_q(T) \cap S_2)$ is dense in $S_2$.
Let $S = \prod_{j=1}^{n} S_j$.
Then there exist uncountably many totally irrational singular vectors in $S$.
\end{theorem}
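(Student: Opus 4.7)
The plan is to combine Proposition \ref{perf} with Theorem \ref{c} in essentially the same way that Theorem \ref{plenty} was deduced from Theorem \ref{5t3}, but with a rapidly decaying auxiliary function $\phi$ chosen so as to upgrade the conclusion from ``$\hat\omega$ large'' to ``singular''.

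First I would apply Proposition \ref{perf} to the product $S=\prod_{j=1}^n S_j$, using the hypotheses that each $S_j$ is perfect and that $\F_q(T)\cap S_1$, $\F_q(T)\cap S_2$ are dense in $S_1$, $S_2$ respectively. This yields collections $\{L_i\}$, $\{L_j'\}$, $\{A_i\}$ of closed subsets of $S$ and rational affine hyperplanes satisfying property A, putting us in position to invoke Theorem \ref{c}.

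Next I would fix the parameters. Take $\Phi(\bq):=\|\bq\|$, which is a proper function because $\FZ^n$ is discrete in $\FF^n$, so any norm ball meets $\FZ^n\setminus\{0\}$ in a finite set. Take $\phi(t):=t^{-(n+1)}$, which is positive and non-increasing. Theorem \ref{c} then produces uncountably many totally irrational $\by\in S$ such that for all sufficiently large $t$,
\[
\psi_{\Phi,\by}(t)\;\leq\;\phi(t)\;=\;\frac{1}{t^{n+1}}.
\]

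Finally I would verify that every such $\by$ is singular. Given any $c>0$, set $t=Q$ in the above inequality: for all $Q$ large enough there exist $(q_0,\bq)\in\FZ\times\FZ^n\setminus\{0\}$ with $\|\bq\|\leq Q$ and $|\bq\cdot\by+q_0|\leq 1/Q^{n+1}$. If $\bq=0$, then $|q_0|\leq 1/Q^{n+1}<1$ forces $q_0=0$, contradicting $(q_0,\bq)\neq 0$; so $\bq\neq 0$ for $Q$ large. Moreover, for $Q>1/c$ we have $1/Q^{n+1}<c/Q^n=\psi_c(Q)$, so $\by$ is $\psi_c$-Dirichlet. Since this holds for every $c>0$, the vector $\by$ is singular, and we have produced uncountably many totally irrational singular vectors in $S$.

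There is no genuine obstacle: the structural work has already been done in Proposition \ref{perf} (which mirrors the real-variable argument of \cite{KNW} using perfectness and the rational-density hypothesis on two coordinates) and in Theorem \ref{c}. The only delicate point is making sure $\phi$ decays strictly faster than every $c/t^n$, which is arranged by the choice $\phi(t)=t^{-(n+1)}$ (indeed any $\phi$ with $t^n\phi(t)\to 0$ would do).
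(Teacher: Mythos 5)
Your proposal is correct and follows essentially the same route as the paper: Proposition \ref{perf} supplies the collections satisfying property A, and Theorem \ref{c} (applied with $\Phi=\Vert\cdot\Vert$ and a rapidly decaying $\phi$) yields the uncountably many totally irrational singular vectors. The paper states this deduction in one line, so your explicit choice $\phi(t)=t^{-(n+1)}$ and the verification that $\bq\neq 0$ and that $1/Q^{n+1}<\psi_c(Q)$ for large $Q$ are exactly the details being left to the reader.
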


\subsection*{Acknowledgements} We thank Ralf Spatzier and Anish Ghosh for many helpful discussions and several helpful remarks which have improved the paper. We also thank Subhajit Jana for several helpful suggestions about the presentation of this paper. The second named author thanks University of Michigan for providing help as this project started as a Research Experience for Undergraduates project in summer 2021.

\bibliographystyle{abbrv}
\bibliography{bib}

\end{document}